\newtheorem{thm}[]{Theorem} 
\newtheorem{Def}[thm]{Definition}
\newtheorem{lem}[thm]{Lemma}
\newtheorem{cor}[thm]{Corollary}
\newtheorem{prp}[thm]{Proposition}
\newtheorem{rem}[thm]{Remark}
\newtheorem{example}[thm]{Example}
\begin{document}
\title{\textbf{\mathversion{bold} 
Topology of the punctual Hilbert schemes of plane curve singularities with a single Puiseux pair}}
\author{Masahiro WATARI}
\date{}
\maketitle

\begin{center}
\textit{Dedicated to Professor Fumio Sakai on his 70th birthday}
\end{center}

\begin{abstract}
Piontkowski proved the existence of affine cell decompositions of Jacobian factors of plane curve singularities with a single Puiseux pair.
He also provided a combinatorial description of the Euler numbers and Betti numbers of these Jacobian factors.
Following his results, Oblomkov, Rasmussen, and Shende demonstrated the existence of affine cell decompositions of punctual Hilbert schemes for the same type of singularity.
In the present paper, we revisit their theorem from a computational perspective and describe the Euler numbers and Betti numbers of the punctual Hilbert schemes. 
\end{abstract}

\noindent
\textbf{Keywords}: plane curve singularity, punctual Hilbert scheme, Euler number, Betti number\\
\textbf{Mathematics Subject Classification (2020)} 14C05, 14G10, 14H20
%Section 1%%%%%%%%%%%%%%%%%%%%%%%%%%%%%%%%%%%%%%%%%%%%%%%%%%%%%%%%%%%%%%%%%%%%%%%%%%%%%%%%%%%%%%%%%
\section{Introduction} \label{Introduction}
Let $X$ be a singular plane curve over $\mathbb{C}$ with its singular point $o$. 
We refer to the pair $(X,o)$ as a plane curve singularity.  
In this paper, we consider the case where $(X,o)$ is reduced and irreducible. 
Let $R:=\widehat{\mathcal{O}}_{X,o}$ denote the completion of  the local ring $\mathcal{O}_{X,o}$ of $X$ at $o$, and let $\Gamma$ be the semigroup of $R$.
We denote by $\mathrm{Hilb}^r (X,o)$ the punctual Hilbert scheme of degree $r$ for the given singularity $(X,o)$. 
It is known that $\mathrm{Hilb}^r (X,o)$ admits a decomposition of the form 
\begin{equation}\label{decomposition 1}
\mathrm{Hilb}^r (X,o)=\bigsqcup_{\Delta\in \mathrm{Mod}_r(\Gamma)}H(\Delta) 
\end{equation}
where $\mathrm{Mod}_r(\Gamma):=\{\Delta|\,\text{$\Delta$ is a $\Gamma$-semimodule with $\#(\Gamma\setminus \Delta)=r$}\}$. 
In general, some of $H(\Delta)$ are empty. 
We  call the component $H(\Delta)$  the \emph{$\Delta$-subset} of $\mathrm{Hilb}^r (X,o)$. 

Piontkowski \cite{P} studied the topology of the Jacobian factors of irreducible plane curve singularities with a single Puiseux pair. 
It is known that a Jacobian factor $J_{X,o}$ of an irreducible plane curve singularity $(X,o)$ is isomorphic to certain punctual Hilbert schemes (see Corollary $\ref{stable}$ below). 
Therefore, $J_{X,o}$ also admits a decomposition (\ref{decomposition 1}). 
By showing that each $\Delta$-subset $H(\Delta)$ in the decomposition is isomorphic to an affine space,   
he described the Euler number and the Betti numbers of $J_{X,o}$.  
Following his result, Oblomkov, Rasmussen, and Shende proved the following theorem:

\begin{thm}[\cite{ORS}, Theorem 13]\label{Watari's result 1} 
Let $p$ and $q$ be coprime positive integers with $p<q$. 
For an irreducible plane curve singularity $(X,o)$ with $\Gamma=\langle p, q\rangle$, each $\Delta$-subset $H(\Delta)$ 
of $\mathrm{Hilb}^r (X,o)$
is isomorphic to an affine space of dimension  
\begin{equation}\label{dimension of H 2}
\sum_{i=1}^{p-1}\#[\{(-\min\Delta +\Gamma )\cap[a_i,a_i+q)\}\setminus \Delta^{(0)}]
\end{equation}
where $\{a_0,\ldots,a_{p-1}\}$ is the $p$-basis of the $0$-normalization $\Delta^{(0)}:=-\min\Delta +\Delta$ of $\Delta$. 
\end{thm}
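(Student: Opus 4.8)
The plan is to realize every nonempty $\Delta$-subset explicitly as the parameter space of a suitable \emph{reduced standard basis} for the ideals it contains, and then read off its dimension by a triangular count. Fix a uniformizer $t$ of the normalization of $(X,o)$, so $R\subseteq\mathbb{C}[[t]]$ and $v(R)=\Gamma=\langle p,q\rangle$. First I would pass to the $0$-normalized situation: multiplication by $t^{-\min\Delta}$ identifies $H(\Delta)$ with the space of $R$-submodules $I'\subseteq t^{-d}R$ with $v(I')=\Delta^{(0)}$, where $d:=\min\Delta$; set $\Gamma':=-d+\Gamma=v(t^{-d}R)$, so $\Delta^{(0)}\subseteq\Gamma'$ and $0=\min\Delta^{(0)}\in\Delta^{(0)}$ (if $\Delta\not\subseteq\Gamma$ then $H(\Delta)=\varnothing$ and there is nothing to prove). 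Since $\mathbb{C}[[t^{p}]]$ is a discrete valuation ring over which $R$, and therefore also $t^{-d}R$ and $I'$, are free of rank $p$, and since the $p$-basis $\{a_{0},\dots,a_{p-1}\}$ of $\Delta^{(0)}$ meets every residue class modulo $p$, any choice of $h_{i}\in I'$ with $v(h_{i})=a_{i}$ and unit leading coefficient is automatically a $\mathbb{C}[[t^{p}]]$-basis of $I'$; conversely, for any such tuple $(h_{0},\dots,h_{p-1})$ the $\mathbb{C}[[t^{p}]]$-span $M$ always satisfies $v(M)=\Delta^{(0)}$, since the valuations $v(\phi_{i})+a_{i}$ of the summands of $\sum_{i}\phi_{i}(t^{p})h_{i}$ lie in distinct residue classes modulo $p$, so they cannot cancel and $v$ of the sum equals $\min_{i}\bigl(v(\phi_{i})+a_{i}\bigr)\in\Delta^{(0)}$. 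Using the standard division/reduction against the basis $(h_{i})$---legitimate precisely because $v(I')=\Delta^{(0)}$---one normalizes each $h_{i}$ to $h_{i}=t^{a_{i}}+\sum_{k}c_{i,k}t^{k}$, the sum running over $k\in\Gamma'\setminus\Delta^{(0)}$ with $k>a_{i}$, and checks that this reduced standard basis is uniquely determined by $I'$.

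It then remains to impose that $M$ be an honest $R$-module, i.e.\ closed under multiplication by $t^{q}$; equivalently $t^{q}h_{i}\in M$ for each $i$. Writing $\sigma$ for the ``$+q$'' permutation of $\mathbb{Z}/p$ (a single $p$-cycle, since $\gcd(p,q)=1$), nonemptiness of $H(\Delta)$ forces $a_{\sigma(i)}\le a_{i}+q$ for every $i$, and under this hypothesis I would reduce $t^{q}h_{i}$ against the standard basis: reduction is obstructed only at exponents in $(\Gamma'\setminus\Delta^{(0)})\cap(a_{i}+q,\infty)$, and $t^{q}h_{i}\in M$ is equivalent to the vanishing of the corresponding obstruction coefficients, each a polynomial in the $c_{j,k}$. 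This produces a finite polynomial system indexed by the pairs $(i,m)$ with $m\in\Gamma'\setminus\Delta^{(0)}$ and $m>a_{i}+q$. The crux---and the step I expect to fight with---is to show that this system is \emph{triangular} for a suitable total order on the unknowns $c_{j,k}$ with $k\ge a_{j}+q$ (ordering by $k$, tracking the shift by $q$): each obstruction equation contains a distinguished unknown, produced by the term of one of the $h_{j}$ responsible for the obstruction, occurring linearly with invertible (constant) coefficient, so that the equation solves uniquely for that unknown as a polynomial in strictly lower unknowns together with the \emph{window} coefficients $c_{j,k}$ having $a_{j}<k<a_{j}+q$, $k\in\Gamma'\setminus\Delta^{(0)}$; in particular the window coefficients are subject to no relations. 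It is here that the single-Puiseux-pair hypothesis is essential: with only the relation ``$+q$'' to bookkeep there is a single obstruction shift and the triangularity is clean, whereas for three or more semigroup generators several shifts interact and the $\Delta$-subsets need not be affine spaces at all.

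Granting the triangularity, I would finish as follows. Iterating the solved equations expresses every non-window coefficient polynomially in the window coefficients; an order/parity check moreover shows that all coefficients of the basis element with valuation $a_{0}$ (the $p$-basis element in the residue class of $q$, with the indexing used in the statement) turn out to be dependent parameters, so that the free parameters are exactly the window coefficients of $h_{1},\dots,h_{p-1}$. Conversely these may be prescribed arbitrarily: any choice determines, through the triangular formulas, a reduced standard basis whose $\mathbb{C}[[t^{p}]]$-span $I'$ satisfies $I'\subseteq t^{-d}R$ and the $t^{q}$-closure equations by construction, while $v(I')=\Delta^{(0)}$ holds automatically, so $I'\in H(\Delta^{(0)})$. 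Hence ``$I'\mapsto$ its window coefficients'' is a bijection between $H(\Delta)\cong H(\Delta^{(0)})$ and an affine space which, being polynomially triangular in both directions, is an isomorphism of the $\Delta$-subset $H(\Delta)$ (a locally closed subscheme of $\mathrm{Hilb}^{r}(X,o)$) onto affine space; its dimension is
\[
\sum_{i=1}^{p-1}\#\bigl\{k\in(-\min\Delta+\Gamma)\setminus\Delta^{(0)}:a_{i}<k<a_{i}+q\bigr\}
=\sum_{i=1}^{p-1}\#\bigl[\bigl\{(-\min\Delta+\Gamma)\cap[a_{i},a_{i}+q)\bigr\}\setminus\Delta^{(0)}\bigr],
\]
the two sums being equal because $a_{i}\in\Delta^{(0)}$; this is (\ref{dimension of H 2}).
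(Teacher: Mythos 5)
Your outline follows the same route as the paper (and Piontkowski): pass to the $0$-normalization, take the $p$ generators $h_i$ in reduced form indexed by the $p$-basis, impose closure of the $\mathbb{C}[[t^p]]$-span under the second generator of $R$, and count free coefficients after a triangular elimination. But two steps are genuinely defective. First, you have silently replaced $(X,o)$ by the monomial curve $\mathbb{C}[[t^p,t^q]]$: you confine the supports of the reduced $h_i$ to $\Gamma'\setminus\Delta^{(0)}$ and test the $R$-module condition by multiplication by $t^q$. For a general branch with one Puiseux pair we have $R=\mathbb{C}[[t^p,\phi]]$ with $\phi=t^q+\sum_i c_it^{\beta_i}$, $\beta_i\notin\Gamma$; then $t^q\notin R$, closure must be tested against $\phi$ (this is how the minimal $S$-processes are formed in the paper), and elements of $I'\subseteq t^{-d}R$ do carry nonzero coefficients at exponents outside $\Gamma'$, so your normal form and your converse step ``$I'\subseteq t^{-d}R$ by construction'' both fail. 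The whole point of the factor $(-\min\Delta+\Gamma)\cap[\,\cdot\,]$ in (\ref{dimension of H 2}) is that those extra coefficients are present but not free: the paper proves this by reducing $t^dh_i$ with respect to the standard basis $\{t^p,\phi\}$ of $R$ and showing each such coefficient is a polynomial in the $c_i$. You have no substitute for this argument. In addition, the triangularity of the obstruction system, which you yourself flag and then grant, is the entire technical content; in the paper it rests on the syzygy Lemma \ref{Syzygies} together with the reduction criterion of Proposition \ref{referee's prp}, which pin down exactly which $S$-process reductions must vanish.

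Second, your final bookkeeping is wrong: the ``order/parity check'' by which all coefficients of $h_0$ become dependent, leaving only the windows of $h_1,\dots,h_{p-1}$, is contradicted by the simplest examples. Take $\Gamma=\langle 2,3\rangle$ and $r=2$, $\Delta=2+\Gamma$: then $\mathcal{I}(\Delta)=\{(t^2+at^3)\,:\,a\in\mathbb{C}\}$, so $H(\Delta)\cong\mathbb{A}^1$, and the unique free parameter is the coefficient of $t$ in $h_0=1+at$, i.e.\ it lies in the $i=0$ window, while your count gives $0$. The count actually established in the paper is Piontkowski's (\ref{dimension of H 3}), in which the $i=0$ window does contribute, intersected with $-\min\Delta+\Gamma$ when $r<c$; the $E_6$ table for $r=2$ (one $0$-cell and one $1$-cell) confirms that the $i=0$ term cannot be dropped. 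The summation range $i=1,\dots,p-1$ printed in (\ref{dimension of H 2}) is evidently a slip for $i=0,\dots,p-1$, and your argument, rather than proving the intended statement, manufactures a justification for that slip and would be refuted by the examples above.
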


\begin{rem}
Although the dimension {\rm (\ref{dimension of H 2})} was originally expressed differently in {\rm \cite{ORS}},
we re-express it here using the $p$-basis, as in {\rm \cite{P}}.
\end{rem}

In this paper, we apply Piontokowski's method, emphasizing the computational aspects, to prove Theorem \ref{Watari's result 1}.
The following result immediately follows:

\begin{cor}\label{Watari's result 2} 
Under the same assumption as in Theorem {\rm \ref{Watari's result 1}},  
the Euler number of $\mathrm{Hilb}^r (X,o)$ is equal to $\# \mathrm{Mod}_r(\Gamma)$. 
\end{cor}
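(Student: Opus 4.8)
The plan is to obtain Corollary \ref{Watari's result 2} as a formal consequence of the affine cell decomposition furnished by Theorem \ref{Watari's result 1}, using additivity of the topological Euler characteristic.

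First I would invoke the decomposition (\ref{decomposition 1}): it expresses the (projective) variety $\mathrm{Hilb}^r(X,o)$ as a finite disjoint union of locally closed subvarieties $H(\Delta)$, indexed by the set $\mathrm{Mod}_r(\Gamma)$, which is finite because the condition $\#(\Gamma\setminus\Delta)=r$ is fixed. For such a decomposition the topological Euler characteristic (equivalently, the compactly supported one, which agrees with it here) is additive, so
\begin{equation*}
\chi\bigl(\mathrm{Hilb}^r(X,o)\bigr) = \sum_{\Delta \in \mathrm{Mod}_r(\Gamma)} \chi\bigl(H(\Delta)\bigr).
\end{equation*}

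Next I would feed in Theorem \ref{Watari's result 1}: for every $\Delta \in \mathrm{Mod}_r(\Gamma)$ the $\Delta$-subset $H(\Delta)$ is isomorphic to an affine space of dimension $d(\Delta)\geq 0$ given by (\ref{dimension of H 2}). The crucial qualitative point is that this forces $H(\Delta) \neq \emptyset$ for \emph{every} $\Delta \in \mathrm{Mod}_r(\Gamma)$ --- in contrast to the general situation, where some $\Delta$-subsets may be empty --- and that $\chi\bigl(H(\Delta)\bigr) = \chi\bigl(\mathbb{A}^{d(\Delta)}\bigr) = 1$. Substituting into the displayed identity, the right-hand side is a sum of exactly $\#\mathrm{Mod}_r(\Gamma)$ ones, whence $\chi\bigl(\mathrm{Hilb}^r(X,o)\bigr) = \#\mathrm{Mod}_r(\Gamma)$.

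If one wants the finer picture underlying the corollary, I would note that (\ref{decomposition 1}) is then an affine paving of $\mathrm{Hilb}^r(X,o)$ by nonempty cells, so by the standard spectral-sequence argument the cohomology of $\mathrm{Hilb}^r(X,o)$ is concentrated in even degrees with $b_{2k} = \#\{\Delta \in \mathrm{Mod}_r(\Gamma) : d(\Delta) = k\}$, and summing over $k$ recovers the Euler number. The only step requiring genuine care is verifying that (\ref{decomposition 1}) really is a stratification by locally closed subsets to which the Euler-characteristic (and cohomological) formalism applies --- that is, that it is an affine paving in the appropriate sense; but this is built into the construction of the $H(\Delta)$ and is reinforced by Theorem \ref{Watari's result 1}, so I do not expect a serious obstacle: the corollary is essentially bookkeeping once Theorem \ref{Watari's result 1} is in hand.
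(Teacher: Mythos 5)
Your proposal is correct and follows essentially the same route as the paper: Theorem \ref{Watari's result 1} makes (\ref{decomposition 1}) an affine cell decomposition with every cell nonempty, so additivity of the Euler characteristic (each cell contributing $1$) gives $e(\mathrm{Hilb}^r(X,o))=\#\mathrm{Mod}_r(\Gamma)$, exactly as in Piontkowski's argument that the paper invokes. The finer Betti-number statement you sketch is likewise the paper's Theorem \ref{Watari's result 3}, obtained there by the same CW/affine-paving reasoning.
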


We also give a combinatorial description of the Euler numbers and Betti numbers of punctual Hilbert schemes, which generalize Piontkowski's result. 
For $\Delta\in\mathrm{Mod}_r(\Gamma)$, define the codimension of $H(\Delta)$ as 
$
\mathrm{codim}\, H(\Delta):=\dim \mathrm{Hilb}^r (X,o) -\dim H(\Delta).
$ 
We also set
\begin{align*}
\mathcal{H}_{r,d}:&=\{H(\Delta)|\,\Delta\in \mathrm{Mod}_r (\Gamma) \text{ and }\dim H(\Delta)=d\},\\
\mathcal{H}^d_r:&=\{H(\Delta)|\,\Delta\in \mathrm{Mod}_r (\Gamma) \text{ and }\mathrm{codim}\, H(\Delta)=d\}.
\end{align*}

\begin{thm}\label{Watari's result 3} 
Let $(X,o)$ be the same as in Theorem {\rm \ref{Watari's result 1}}. 
The odd $($co-$)$ homology groups of  $\mathrm{Hilb}^r (X,o)$ vanish. 
The even $($co-$)$ homology groups of  $\mathrm{Hilb}^r (X,o)$ are free abelian groups with Betti numbers
$$
\text{$h_{2d}(\mathrm{Hilb}^r (X,o))=\# \mathcal{H}_{r,d}$ and $h^{2d}(\mathrm{Hilb}^r (X,o))=\# \mathcal{H}^d_r$.}
$$
\end{thm}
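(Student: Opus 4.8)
The plan is to deduce the homological statement from the affine cell decomposition supplied by Theorem \ref{Watari's result 1} together with the disjoint decomposition (\ref{decomposition 1}). First I would note that by Theorem \ref{Watari's result 1} every nonempty $\Delta$-subset $H(\Delta)$ is isomorphic to an affine space $\mathbb{A}^{d(\Delta)}$, where $d(\Delta)$ is the integer in (\ref{dimension of H 2}); discarding the empty ones, (\ref{decomposition 1}) exhibits $\mathrm{Hilb}^r(X,o)$ as a finite disjoint union of affine cells indexed by $\mathrm{Mod}_r(\Gamma)$. The first genuine step is to check that this is an \emph{algebraic cell decomposition} in the sense that the closure of each cell is a union of cells of strictly smaller dimension — equivalently, that there is a filtration of $\mathrm{Hilb}^r(X,o)$ by closed subvarieties whose successive differences are the $H(\Delta)$. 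This follows from the semicontinuity of the relevant stratification: one orders the strata by a suitable partial order on $\Gamma$-semimodules (e.g.\ refining the dimension function $d(\Delta)$ to a total order) and verifies that $\overline{H(\Delta)}\subseteq\bigsqcup_{d(\Delta')\le d(\Delta)}H(\Delta')$, using that $\mathrm{Hilb}^r(X,o)$ is projective and the $H(\Delta)$ are locally closed.

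Once the cell structure is in place, I would invoke the standard fact (see e.g.\ Fulton, \emph{Intersection Theory}, Example 1.9.1, or the Bia\l ynicki-Birula machinery) that a complex variety admitting a decomposition into locally closed pieces each isomorphic to an affine space has no odd-degree (co)homology, has torsion-free even (co)homology, and the classes of the cell closures form a $\mathbb{Z}$-basis of homology — so that $h_{2d}$ equals the number of $d$-dimensional cells and $h^{2d}$ equals the number of cells of codimension $d$. Concretely, the long exact sequences of the pairs in the filtration degenerate because $H_k(\mathbb{A}^{d},\mathbb{A}^{d}\setminus 0)=H_{k-2d}(\mathrm{pt})$ is concentrated in a single even degree, giving $h_{2d}(\mathrm{Hilb}^r(X,o))=\#\{\Delta\in\mathrm{Mod}_r(\Gamma):d(\Delta)=d\}=\#\mathcal{H}_{r,d}$, and Poincaré–Lefschetz duality (or the dual filtration) converts the dimension count into the codimension count $h^{2d}=\#\mathcal{H}^d_r$. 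I should also record that $\mathrm{Hilb}^r(X,o)$ is (by the same decomposition) connected of pure dimension equal to $\max_{\Delta}d(\Delta)$, so that $\mathrm{codim}\,H(\Delta)$ is well defined.

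The main obstacle I anticipate is the first step: verifying that the decomposition (\ref{decomposition 1}) is a \emph{good} (i.e.\ filtrable) cell decomposition and not merely a set-theoretic partition into affine pieces. A set-theoretic partition into affine spaces does imply the Euler-number statement of Corollary \ref{Watari's result 2} for free, but it does \emph{not} by itself force the vanishing of odd homology or the Betti-number formula; one needs the closure relations. I expect to handle this either by appealing to the construction in \cite{ORS} (their cells arise from a torus action or an explicit affine stratification of a compactified Jacobian, which is automatically filtrable by Bia\l ynicki-Birula), or by a direct argument: realize $\mathrm{Hilb}^r(X,o)$ inside a product of Grassmannians via truncation of ideals modulo a large power of the maximal ideal, observe that each $H(\Delta)$ lies in a Schubert-type coordinate patch determined by the semimodule $\Delta$, and use the resulting dimension-monotone incidence of patches to build the required closed filtration. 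After that, the homological conclusions are formal.
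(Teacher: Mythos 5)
Your treatment of the homology half follows essentially the same route as the paper (and Piontkowski): take the affine cells of Theorem \ref{Watari's result 1}, order them into a closed filtration, and run the standard even-cell argument. You are right that filtrability is the one point needing care; the paper delegates exactly this to \cite{P} (``the $\Delta$-subsets form a CW complex''), and your suggestion to obtain it from the torus action, respectively from the Schubert-type cells of the ambient Grassmannian into which $\mathrm{Hilb}^r(X,o)$ embeds as a closed subvariety (Proposition \ref{thm3}), is in the spirit of what is done there. Up to this point the proposal is sound.

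The genuine gap is your derivation of the cohomological formula $h^{2d}=\#\mathcal{H}^d_r$ from Poincar\'e--Lefschetz duality. The spaces $\mathrm{Hilb}^r(X,o)$ and $J_{X,o}$ are in general singular projective varieties (and for $r<c$ need not be pure-dimensional), so duality is simply not available; worse, no duality-type argument can bridge the two displayed equalities, because once $H_*$ is finitely generated free with $h_{2d}=\#\mathcal{H}_{r,d}$, universal coefficients forces the ordinary singular cohomology to satisfy $h^{2d}=h_{2d}=\#\mathcal{H}_{r,d}$, whereas $\#\mathcal{H}^d_r$ counts cells by codimension and these counts genuinely differ: in the $E_6$ tables of Section \ref{Examples} one has, for $r=6$, $h_2=1$ but $h^2=2$ (equivalently $b_2\neq b_4$ for $J_{X,o}$, so even numerical Poincar\'e duality fails), and for $r=4$, $h_0=1$ but $h^0=2$. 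So the ``dual filtration'' step would not close; the codimension-indexed statement must instead be read and proved the way the paper (following \cite{P}) does, namely directly from the same cell filtration with cohomology taken in the codimension-indexed (Borel--Moore/cellular) convention used there, rather than deduced from the homology count by duality. Relatedly, your parenthetical claims that $\mathrm{Hilb}^r(X,o)$ is connected and of pure dimension are neither justified nor needed: the paper defines $\mathrm{codim}\,H(\Delta)$ via $\dim\mathrm{Hilb}^r(X,o)$ without any purity assumption, and a cell decomposition by itself does not give connectedness.
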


\begin{rem}\label{Remark 1}
In { \rm \cite{P}}, Piontkowski originally proved  Theorem $\ref{Watari's result 1}$, Corollary $\ref{Watari's result 2}$, and Theorem $\ref{Watari's result 3}$ for the case where $r\ge c$.
\end{rem}

This paper is structured as follows: 
Sections \ref{preliminaries}, \ref{Semi-modules}, and \ref{Grobner bases and Syzygies} summarize the foundational concepts required for the proofs. 
Section \ref{preliminaries} recalls the properties of punctual Hilbert schemes of curve singularities. 
Section \ref{Semi-modules} introduces definitions and facts related to $\Gamma$-semimodules. 
Section \ref{Grobner bases and Syzygies} discusses computational techniques involving Gröbner bases. 
In Section \ref{Proof}, we prove the main theorems. 
Section \ref{Examples} presents examples, and Section \ref{Remarks} provides remarks on related results.

%Section 2%%%%%%%%%%%%%%%%%%%%%%%%%%%%%%%%%%%%%%%%%%%%%%%%%%%%%%
\section{Punctual Hilbert schemes of curve singularities}\label{preliminaries}
In this section, we recall properties of punctual Hilbert schemes established in \cite{PS}. 
Although we focus on irreducible plane curve singularities here, the notions presented here hold in more general situations. 

Let $(X,o)$  be an irreducible plane curve singularity. 
By Puiseux's theorem, there exist  coprime positive integers $p$ and $q$ such that 
$
R=\mathbb{C}[[t^p, \phi]]
$
where $\phi=t^q+\text{higher order terms}$ and $p<q$. 
The normalization $\overline{R}$ of $R$ is isomorphic to $\mathbb{C}[[t]]$. 
Let $\nu$ be the natural valuation $\nu:\overline{R}\setminus \{0\} \rightarrow \mathbb{Z}_{\ge 0}$  defined by $\nu (f)=\mathrm{ord}_t(f)$, and set $\nu(0)=\infty$. 
We call $\Gamma:=\nu (R)$ the \emph{semigroup} of $R$. 
A  semigroup $\Gamma$ generated by positive integers $\gamma_1,\gamma_2,\ldots,\gamma_l$ is denoted as 
 $\Gamma=\langle\gamma_1,\gamma_2,\ldots,\gamma_l\rangle$. 
The \emph{$\delta$-invariant} of $R$ is defined as $\delta:=\dim _{\mathbb{C}}(\overline{R}/R)$. 
The \emph{conductor} $c$ of $\Gamma$ is the smallest integer in $\Gamma$ such that $c-1\notin \Gamma$ and $c+n\in \Gamma$ for any $n\in \mathbb{N}$. 
It satisfies $\delta+1\le c\le 2\delta$ and $c=2\delta$ if and only if $R$ is Gorenstein (see \cite{Serre}). 

For a positive integer $a$, we denote by $(t^a)$ an ideal of $\overline{R}$.
Let $\mathrm{Gr}\left(\delta,\overline{R}/(t^{2\delta})\right)$ be the Grassmannian which consists of $\delta$-dimensional linear subspaces of $\overline{R}/(t^{2\delta})$. 
For $M\in\mathrm{Gr}\left(\delta,\overline{R}/(t^{2\delta})\right)$, we define a multiplication by $R\times M\ni (f,m+(t^{2\delta}))\mapsto fm+(t^{2\delta})\in M$. 
The set 
$$
J_{X,o}:=\bigg\{M\in \mathrm{Gr}\left(\delta,\overline{R}/(t^{2\delta})\right)\bigg|\,
\begin{array}{l}
\text{$M$ is an $R$-submodule with respect to}\\
\text{ the above multiplication.}
\end{array}
\bigg\}
$$
is called the \emph{Jacobian factor} of $(X,o)$, 
which was introduced by Rego in \cite{Re}. 
For any non-negative integer $r$, set
$$
\mathcal{I}_r:=\{I\subset R|\,\text{$I$ is an ideal of $R$ with $\dim R/I=r$}\}.
$$

Observe that any element $I$ in $\mathcal{I}_r$ satisfies $(t^{r+2\delta}) \cap R \subseteq I\subseteq (t^r)\cap R$. 
It follows that $(t^{2\delta})\subseteq t^{-r}I \subseteq \overline{R}$ and $\dim_{k}(\overline{R}/t^{-r}I)=\dim_{k}(\overline{R}/I)-r=\delta$. 
Using these properties, Pfister and Steenbrink defined a map 
\begin{equation}\label{phai}
\varphi_r: \mathcal{I}_r\rightarrow J_{X,o}\subset \mathrm{Gr}(\delta, \overline{R}/(t^{2\delta}))
\end{equation}
by $\varphi _r (I)=t^{-r}I/(t^{2\delta})$ (see \cite{PS}). 
We call $\varphi _r$ the \emph{$\delta$-normalized embedding}. 
It has the following properties:
\begin{prp}[\cite{PS}, Theorem\,3]\label{thm3}
The $\delta$-normalized embedding $\varphi_r$  is injective for any non-negative integer $r$. 
It is also a bijection for $r\ge c$. 
The punctual Hilbert scheme $\mathrm{Hilb}^r (X,o)$  is Zariski closed in 
%the Jacobian factor 
$J_{X,o}$. 
\end{prp}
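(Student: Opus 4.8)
\emph{Proof proposal.} The plan is to produce an explicit set-theoretic inverse to $\varphi_r$ on its image and then analyse that image inside the Grassmannian. Write $\pi\colon\overline{R}\to\overline{R}/(t^{2\delta})$ for the quotient map, and for $M\in J_{X,o}$ put $\Psi(M):=t^{r}\cdot\pi^{-1}(M)$, an $R$-submodule of $\overline{R}$ (it is $R$-stable because $M$ and $(t^{2\delta})$ are). For $I\in\mathcal{I}_r$ we have $(t^{2\delta})\subseteq t^{-r}I\subseteq\overline{R}$, as recorded above, so $t^{-r}I$ is exactly the full preimage $\pi^{-1}(\varphi_r(I))$; multiplying by $t^{r}$ gives $\Psi(\varphi_r(I))=I$. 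Hence $\Psi\circ\varphi_r=\mathrm{id}_{\mathcal{I}_r}$ and $\varphi_r$ is injective for every $r\ge 0$.

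For $r\ge c$ I would check that $\Psi$ maps $J_{X,o}$ into $\mathcal{I}_r$ and is a two-sided inverse. Given $M\in J_{X,o}$, the module $L:=\pi^{-1}(M)$ satisfies $(t^{2\delta})\subseteq L\subseteq\overline{R}$ and $\dim_{\mathbb C}\overline{R}/L=\delta$, so $\Psi(M)=t^{r}L\subseteq t^{r}\overline{R}=(t^{r})$. Since $r\ge c$ we have $(t^{r})\subseteq(t^{c})\subseteq R$, so $\Psi(M)$ is an ideal of $R$. Because multiplication by $t^{r}$ is injective on the domain $\overline{R}$, it induces an isomorphism $\overline{R}/L\xrightarrow{\ \sim\ }t^{r}\overline{R}/t^{r}L$, whence
$$
\dim_{\mathbb C}\overline{R}/\Psi(M)=\dim_{\mathbb C}\overline{R}/t^{r}\overline{R}+\dim_{\mathbb C}t^{r}\overline{R}/t^{r}L=r+\delta,
$$
and therefore $\dim_{\mathbb C}R/\Psi(M)=(r+\delta)-\delta=r$, i.e.\ $\Psi(M)\in\mathcal{I}_r$. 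Finally $\varphi_r(\Psi(M))=t^{-r}(t^{r}L)/(t^{2\delta})=L/(t^{2\delta})=M$, so $\varphi_r$ is bijective for $r\ge c$.

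For the last assertion, observe that for \emph{every} $r\ge 0$ the same computation shows $M\in J_{X,o}$ lies in the image of $\varphi_r$ if and only if $\Psi(M)=t^{r}\pi^{-1}(M)\subseteq R$ (the colength of $\Psi(M)$ is automatically $r$, and $\varphi_r(\Psi(M))=M$ as soon as $\Psi(M)\subseteq R$). Since $(t^{r+2\delta})\subseteq t^{r}\pi^{-1}(M)$ and $(t^{r+2\delta})\subseteq R$, this containment only concerns the finite-dimensional quotient $(t^{r})/(t^{r+2\delta})$: under the $\mathbb C$-linear isomorphism $\overline{R}/(t^{2\delta})\xrightarrow{\ \cdot t^{r}\ }(t^{r})/(t^{r+2\delta})$ the subspace $\pi^{-1}(M)/(t^{2\delta})$ is carried to $\Psi(M)/(t^{r+2\delta})$, and $\Psi(M)\subseteq R$ becomes the inclusion of this $\delta$-dimensional subspace into the fixed subspace $\big(R\cap(t^{r})\big)/(t^{r+2\delta})$. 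Inclusion into a fixed subspace is a closed (Schubert-type) condition on a Grassmannian, and $M\mapsto\Psi(M)/(t^{r+2\delta})$ is a morphism $J_{X,o}\to\mathrm{Gr}\big(\delta,(t^{r})/(t^{r+2\delta})\big)$; hence the image of $\varphi_r$ is Zariski closed in $J_{X,o}$. Identifying $\mathrm{Hilb}^r(X,o)$ with this image then finishes the proof. The main obstacle is precisely this last identification at the scheme level — checking that $\varphi_r$ is a morphism of schemes and that its scheme-theoretic image carries the Hilbert-scheme structure — and here I would invoke \cite{PS} rather than reprove it; the set-theoretic content is the elementary linear algebra above.
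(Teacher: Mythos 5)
Your argument is correct, and it is worth noting that the paper itself offers no proof of this proposition --- it is quoted from Pfister--Steenbrink \cite{PS}, Theorem 3 --- so your proposal in effect reconstructs the cited argument: injectivity via the explicit inverse $I=t^{r}\pi^{-1}(\varphi_r(I))$, surjectivity for $r\ge c$ from the containment $(t^{r})\subseteq(t^{c})\subseteq R$, and closedness by rewriting the image as the Schubert-type condition $t^{r}\pi^{-1}(M)\subseteq R$ inside the finite-dimensional quotient $(t^{r})/(t^{r+2\delta})$. Two small points: the inclusion $(t^{c})\subseteq R$ (i.e.\ that every power series of order at least $c$ lies in $R$, not merely the monomials $t^{n}$ with $n\ge c$) is the standard conductor-ideal fact and deserves a word, since it is what makes $\Psi(M)$ an ideal of $R$; and your closing worry about scheme structure is not actually an obstacle here, because the paper \emph{defines} $\mathrm{Hilb}^r(X,o):=\varphi_r(\mathcal{I}_r)$ as a subset of the Grassmannian, so the set-theoretic closedness you establish is exactly the assertion being used --- invoking \cite{PS} for the finer identification with the abstract punctual Hilbert scheme is a reasonable division of labour.
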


\begin{Def}
We call $\mathrm{Hilb}^r (X,o):=\varphi_r(\mathcal{I}_r)$ the punctual Hilbert scheme of degree $r$ for $(X,o)$. 
\end{Def}

The following fact follows from Proposition\,\ref{thm3}:

\begin{cor}\label{stable}
For any positive integer $r$ with $r\ge c$, we have  
$$\mathrm{Hilb}^r(X,o)\cong \mathrm{Hilb}^c (X,o)\cong J_{X,o}.$$
\end{cor}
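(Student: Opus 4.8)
The plan is to extract everything from Proposition \ref{thm3} together with the definition $\mathrm{Hilb}^r(X,o):=\varphi_r(\mathcal{I}_r)$. First I would record the purely set-theoretic content: by Proposition \ref{thm3}, for $r\ge c$ the $\delta$-normalized embedding $\varphi_r\colon \mathcal{I}_r\to J_{X,o}$ is a bijection, so as subsets of $\mathrm{Gr}(\delta,\overline{R}/(t^{2\delta}))$ we have
$$
\mathrm{Hilb}^r(X,o)=\varphi_r(\mathcal{I}_r)=J_{X,o}.
$$
In particular this already holds with $r$ replaced by $c$, since $c\ge c$.

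Next I would upgrade this equality of sets to an isomorphism of (reduced) schemes. By Proposition \ref{thm3} the subscheme $\mathrm{Hilb}^r(X,o)$ is Zariski closed in $J_{X,o}$, and the map $\varphi_r$ is induced by the universal family of ideals, hence is a morphism; it is therefore a closed immersion whose image is all of $J_{X,o}$. A closed immersion that is surjective on points is an isomorphism once we know the target has no superfluous nilpotents, and in any case it induces an isomorphism on the associated reduced schemes — which is all that is needed for the homological statements of the paper. Thus $\mathrm{Hilb}^r(X,o)\cong J_{X,o}$ for every $r\ge c$. Applying this both to the given $r$ and to $r=c$ yields the chain
$$
\mathrm{Hilb}^r(X,o)\cong J_{X,o}\cong \mathrm{Hilb}^c(X,o),
$$
which is the assertion of the corollary.

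The only step that is not a formality is the passage from the set-theoretic bijection of Proposition \ref{thm3} to an isomorphism of varieties: one must know that $\varphi_r$ is a morphism realizing $\mathrm{Hilb}^r(X,o)$ as a \emph{closed subscheme} of $J_{X,o}$ (supplied by Proposition \ref{thm3}) and that this subscheme, being set-theoretically everything, carries no extra scheme structure — handled either by a reducedness argument for $J_{X,o}$ or, more cheaply, by working throughout with reduced structures, since the applications only concern the underlying topological spaces and their (co)homology.
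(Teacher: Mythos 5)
Your argument is exactly the paper's (implicit) one: the corollary is stated as an immediate consequence of Proposition \ref{thm3}, using the bijectivity of $\varphi_r$ for $r\ge c$ together with the definition $\mathrm{Hilb}^r(X,o):=\varphi_r(\mathcal{I}_r)$, so for all $r\ge c$ these sets coincide with $J_{X,o}$ inside the Grassmannian. Your additional remark about passing from the set-theoretic bijection to an isomorphism (working with the induced reduced/subvariety structure) is a reasonable filling-in of a detail the paper leaves tacit, and does not change the approach.
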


%Section 3%%%%%%%%%%%%%%%%%%%%%%%%%%%%%%%%%%%%%%%%%%%%%%%%%%%%%%%%%%
\section{$\Gamma$-semimodules}\label{Semi-modules}
Let $\Gamma$ be a semigroup.   
A subset $\Delta\subset \mathbb{Z}$ is called a \emph{$\Gamma$-semimodule} if it satisfies $\Delta+\Gamma\subseteq \Delta$.   
It is straitforward to see that, for any ideal $I$ of $R$, the set $\Gamma(I):=\{\nu(f)|\,f\in I\}$ is a $\Gamma$-semimodule.
If integers $a_1,\ldots,a_s$ generate  $\Delta$ (i.e., $\Delta=\sum_{i=1}^s(a_i+\Gamma)$ holds), 
we write $\Gamma=\langle a_1,\ldots,a_s\rangle_\Gamma$.   
Two $\Gamma$-semimodules $\Delta_1$ and $\Delta_2$ are \emph{isomorphic} if there exists an integer $a$ such that $\Delta_1=a+\Delta_2=\{a+d|\,d\in \Delta_2\}$. 
We define two special normalizations of a $\Gamma$-semimodule $\Delta$: 
We define the \emph{$0$-normalization} of $\Delta$ as $\Delta^{(0)}:=-\min \Delta+\Delta$.  
In general, $\Delta$ is called \emph{$0$-normalized} if $\min {\Delta}=0$. 
On the other hand, $\Delta$ is said to be \emph{$\delta$-normalized} if $\# (\mathbb{N}\setminus \Delta)=\delta$. 
For $I\in\mathcal{I}_r$, there exists an element $\Delta$ of $\mathrm{Mod}_r(\Gamma)$ such that $\Delta=\Gamma(I)$. 
From the inclusion $(t^{2\delta})\subset t^{-r}I$ and the definition of $\varphi_r$ in (\ref{phai}), it follows  that 
$-r+\Delta$ is $\delta$-normalized, and $\#\{(-r+\Delta)\cap [0,2\delta-1]\}=\delta$ holds. 
We call $-r+\Delta$ the  \emph{$\delta$-normalization} of $\Delta$ and denote it by $\Delta^{(\delta)}$. 

In \cite{P}, Piontkowski introduced special generators of a $\Gamma$-semimodule. 
\begin{Def}[\cite{P}]
Let $\Gamma=\langle p,q\rangle$ where $p<q$ and $\gcd(p,q)=1$. 
The \emph{$p$-basis} of a $\Gamma$-semimodule $\Delta$ is the unique set $\{a_0,a_1,\ldots, a_{p-1}\}$ satisfying
\begin{equation*}
\Delta=\bigcup_{i=0}^{p-1}(a_i+p\mathbb{N}) \text{\  \ and\  \ } a_i\equiv iq \text{ $(\mathrm{mod}\ p)$}.
\end{equation*}
In particular, we have $\Delta=\langle a_0,\ldots,a_{p-1}\rangle_\Gamma$.
\end{Def}

Throughout this paper, we consider the $p$-basis only for $0$-normalized $\Gamma$-semimodules and 
therefore we always set $a_0=0$. 
Using the inclusion $\mathbb{N}q\subset \Gamma\subset \Delta^{(0)}$, we find non-negative integers $\alpha_1,\ldots,\alpha_{p-1}\in\mathbb{N}$ such that 
\begin{equation}\label{alphas}
a_0=0,\,a_1=q-\alpha_1p,\,a_2=2q-\alpha_2p,\,\ldots ,\,a_{p-1}=(p-1)q-\alpha_{p-1}p.
\end{equation}
We also set $\alpha_0=0$. 
These satisfy $0\le \alpha_1 \le \alpha_2\le\cdots \le \alpha_{p-1}<q. $

The set $\mathcal{I}_r$ can be decomposed in terms of $\Gamma$-semimodules.
\begin{lem}[\cite{SW1}, Proposition 6]\label{decomposition 2}
We have
\begin{equation}\label{decomposition of ideal}
\mathcal{I}_r=\bigsqcup_{\Delta\in \mathrm{Mod}_r(\Gamma)}\mathcal{I}(\Delta) 
\end{equation}
where $\mathcal{I}(\Delta):=\{I\in \mathcal{I}_r|\,\Gamma(I)=\Delta\}$. 
\end{lem}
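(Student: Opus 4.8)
The plan is to verify that the assignment $I \mapsto \Gamma(I)$ gives a well-defined map $\mathcal{I}_r \to \mathrm{Mod}_r(\Gamma)$ and that the fibers $\mathcal{I}(\Delta)$ partition $\mathcal{I}_r$. First I would recall from Section \ref{Semi-modules} that for any ideal $I$ of $R$ the value set $\Gamma(I)=\{\nu(f)\mid f\in I,\ f\neq 0\}$ is closed under addition by elements of $\Gamma$: if $\nu(f)=d$ and $\gamma\in\Gamma$ is realized by $g\in R$ with $\nu(g)=\gamma$, then $fg\in I$ has $\nu(fg)=d+\gamma$ (valuations add, and $\overline{R}\cong\mathbb{C}[[t]]$ is a domain, so there is no cancellation). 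Hence $\Gamma(I)$ is a $\Gamma$-semimodule. The key numerical point is that when $I\in\mathcal{I}_r$, i.e. $\dim_{\mathbb C} R/I = r$, one has $\#(\Gamma\setminus\Gamma(I)) = r$, so $\Gamma(I)\in\mathrm{Mod}_r(\Gamma)$.

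To establish that colength equality, I would use the standard filtration argument: both $R$ and $I$ are filtered by valuation, $R_{\ge n}:=\{f\in R\mid \nu(f)\ge n\}\cup\{0\}$ and similarly $I_{\ge n}$, and the successive quotients $R_{\ge n}/R_{\ge n+1}$ are one-dimensional exactly when $n\in\Gamma$ (zero otherwise), while $I_{\ge n}/I_{\ge n+1}$ is one-dimensional exactly when $n\in\Gamma(I)$. Since for $n\gg 0$ (say $n\ge r+2\delta$) we have $R_{\ge n}=I_{\ge n}$, summing the local contributions telescopes to give
\begin{equation*}
\dim_{\mathbb C}(R/I)=\#\{n\ge 0 : n\in\Gamma,\ n\notin\Gamma(I)\}=\#(\Gamma\setminus\Gamma(I)),
\end{equation*}
using $\Gamma(I)\subseteq\Gamma$. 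Thus $\#(\Gamma\setminus\Gamma(I))=r$ and $\Gamma(I)\in\mathrm{Mod}_r(\Gamma)$, so each $I$ lies in exactly one $\mathcal{I}(\Delta)$.

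Finally, the decomposition \eqref{decomposition of ideal} is then immediate: the sets $\mathcal{I}(\Delta)=\{I\in\mathcal{I}_r\mid\Gamma(I)=\Delta\}$ are by definition pairwise disjoint (an ideal has a single value semimodule), and every $I\in\mathcal{I}_r$ belongs to $\mathcal{I}(\Gamma(I))$ with $\Gamma(I)\in\mathrm{Mod}_r(\Gamma)$ by the above, so their union is all of $\mathcal{I}_r$. I expect the only genuine obstacle to be the colength identity $\dim_{\mathbb C}(R/I)=\#(\Gamma\setminus\Gamma(I))$; once the valuation filtration is set up carefully and one checks that the graded pieces behave as claimed (in particular that $\Gamma(I)\subseteq\Gamma$, which holds since $I\subseteq R$), the rest is formal. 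Since this is precisely \cite[Proposition 6]{SW1}, I would either cite it directly or include the short filtration computation above for completeness.
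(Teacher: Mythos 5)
Your proposal is correct, and it is essentially the standard argument: the paper itself gives no proof of this lemma but simply cites \cite{SW1} (Proposition 6), and your verification that $\Gamma(I)$ is a $\Gamma$-semimodule together with the valuation-filtration computation of the colength identity $\dim_{\mathbb{C}}(R/I)=\#(\Gamma\setminus\Gamma(I))$ is exactly the expected content of that citation, with disjointness and covering being formal as you say.
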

By setting $H(\Delta):=\varphi_r(\mathcal{I}(\Delta))$,  the stratification (\ref{decomposition 1}) of $\mathrm{Hilb}^r (X,o)$ follows from Lemma \ref{decomposition 2}.

\begin{rem}
In general, some of the components in $(\ref{decomposition of ideal})$ may be empty. 
However, in {\rm \cite{P}}, Piontkowski showed that all components are not empty for an irreducible curve singularity with $\Gamma=\langle p, q\rangle$.
\end{rem}

%Section 4%%%%%%%%%%%%%%%%%%%%%%%%%%%%%%%%%%%%%%%%%%%%%%%%%%%%%
\section{Gr\"{o}bner bases}\label{Grobner bases and Syzygies}
We begin by recalling some key facts about Gröbner bases, primarily following \cite{HH}. 
Let $R=\mathbb{C}[[x_1(t),\ldots,x_l(t)]]$ be a subring of $\mathbb{C}[[t]]$ such that $\dim_{\mathbb{C}}\mathbb{C}[[t]]/R<\infty$, and let $M\subset \mathbb{C}[[t]]$ be an $R$-module. 
As in Section \ref{preliminaries}, let $\nu$ be the natural valuation $\nu:\mathbb{C}[[t]]\setminus \{0\} \rightarrow \mathbb{Z}_{\ge 0}$. 
We also set $\nu(0)=\infty$. 
Put $\Gamma:=\nu(R)$ and $\Gamma(M):=\nu(M)$. 
Clearly, $\Gamma(M)$ is a $\Gamma$-semimodule. 
We consider  the local order 
$\nu(1) \succ\nu  (t)\succ\nu  (t^2)\succ \cdots$. 
For $f\in M$, we denote by ${\small \mathrm{LC}}(f)$ (resp. ${\small \mathrm{LT}}(f))$ the leading coefficient of $f$ (resp. the leading term of $f$) with respect to this order. 

\begin{Def}\label{Def of SAGBI}
A subset $G=\{g_1,\ldots,g_m\}$ of $R$ is called a \emph{SAGBI basis} $($Subalgebra Analog to Gr\"{o}bner Bases for Ideals$)$ 
if, for any $f\in R$, there exists a multi-index $(\beta_1, \ldots,\beta_m)\in \mathbb{Z}^m_{\ge 0}$
such that $\mathrm{\small LT}(f)=\mathrm{\small LT}(g_1^{\beta_1}\cdots g_m^{\beta_m})$.
\end{Def}

Definition \ref{Def of SAGBI} implies the following:  

\begin{thm}
A SAGBI basis $G$ of R generates $R$ as $\mathbb{C}$-algebra. 
\end{thm}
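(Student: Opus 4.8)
The plan is to prove that a SAGBI basis $G=\{g_1,\dots,g_m\}$ of $R$ generates $R$ as a $\mathbb{C}$-algebra, i.e.\ that $R=\mathbb{C}[g_1,\dots,g_m]$. One inclusion is immediate: since each $g_i\in R$ and $R$ is a ring, the subalgebra $\mathbb{C}[g_1,\dots,g_m]$ is contained in $R$. The substance is the reverse inclusion $R\subseteq\mathbb{C}[g_1,\dots,g_m]$, and I would establish it by a valuation-decreasing induction, the standard SAGBI/subduction argument adapted to the local order $\nu(1)\succ\nu(t)\succ\nu(t^2)\succ\cdots$ fixed in the text.

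The key steps are as follows. First, fix an arbitrary $f\in R$; I want to write $f$ as a polynomial in $g_1,\dots,g_m$. Since $\mathbb{C}[[t]]$ is $\nu$-graded and $R$ has finite colength, I may assume $f\neq 0$ and argue by descending induction along the well-ordered value set, or more carefully by a Noetherian/termination argument on the sequence of leading terms produced below. By Definition \ref{Def of SAGBI}, there is a multi-index $(\beta_1,\dots,\beta_m)\in\mathbb{Z}^m_{\ge0}$ with $\mathrm{LT}(f)=\mathrm{LT}(g_1^{\beta_1}\cdots g_m^{\beta_m})$; after rescaling by the constant $c=\mathrm{LC}(f)/\mathrm{LC}(g_1^{\beta_1}\cdots g_m^{\beta_m})\in\mathbb{C}^\times$, the element $f':=f-c\,g_1^{\beta_1}\cdots g_m^{\beta_m}$ lies in $R$ and satisfies $\nu(f')\succ\nu(f)$ (strictly higher order in $t$), or $f'=0$. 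Iterating, I obtain a (possibly infinite) expression $f=\sum_j c_j\, g_1^{\beta_1^{(j)}}\cdots g_m^{\beta_m^{(j)}}$ in which the orders $\nu$ of the successive monomials strictly increase. The point to nail down is that this process actually exhibits $f$ as a genuine polynomial: one shows that $\Gamma=\nu(R)$ is a numerical semigroup with finite complement (because $\dim_{\mathbb{C}}\mathbb{C}[[t]]/R<\infty$), so the value set is order-isomorphic to a subset of $\mathbb{N}$ and the strictly increasing sequence of values $\nu(f)\prec\nu(f_1)\prec\cdots$ either terminates (giving a finite polynomial expression directly) or, if one prefers to allow it to run, converges $t$-adically and the partial sums converge to $f$ in $\mathbb{C}[[t]]$ — but since each partial remainder lies in $R$ and the final limit is $f\in R$, and one can truncate once the remainder lies in a high power $(t^N)\cap R$ and then handle the tail separately, the cleanest route is to first reduce to the case where $f$ is a polynomial tail-truncation. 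I would instead phrase the induction so termination is automatic: work modulo $(t^N)$ for $N$ large, run subduction there (finitely many steps since only finitely many values lie below $N$), and then let $N\to\infty$, using that $R\cap(t^N)$ is generated appropriately; alternatively, invoke that $R$ is a finitely generated $\mathbb{C}$-algebra a priori and combine with the subduction to identify the generators.

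The main obstacle is precisely the termination/convergence issue in the complete local setting: with a local order, subduction raises the order of vanishing, so "leading term" moves toward $+\infty$ rather than reaching a minimum, and one must be careful that the resulting expression is a polynomial and not merely a formal power series in the $g_i$. The resolution uses the finite-colength hypothesis — only finitely many natural numbers are missing from $\Gamma$, equivalently $(t^c)\subseteq R$ for the conductor $c$ — so that once a remainder has order $\ge c$ it already lies in $(t^c)=\langle t^c,t^{c+1},\dots\rangle$, which is visibly inside $\mathbb{C}[g_1,\dots,g_m]$ provided the monomials $t^c,\dots$ are reachable; more robustly, one notes that $\mathbb{C}[[t]]$ is module-finite over $R$, hence $R$ is Noetherian and the chain of subalgebras stabilizes, forcing the subduction expression to close up after finitely many steps. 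I would isolate this as the one real lemma and otherwise present the subduction step as the routine heart of the argument. The final step is simply to collect: the finite expression $f=\sum_j c_j\,\mathbf{g}^{\beta^{(j)}}$ shows $f\in\mathbb{C}[g_1,\dots,g_m]$, and since $f\in R$ was arbitrary, $R=\mathbb{C}[g_1,\dots,g_m]$, as claimed.
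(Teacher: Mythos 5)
Your subduction step is the right mechanism, but the theorem you set out to prove is stronger than what is true, and the devices you invoke to force termination do not work. In this complete local setting the correct conclusion is that $G$ generates $R$ \emph{topologically}, i.e.\ $R=\mathbb{C}[[g_1,\ldots,g_m]]$: every $f\in R$ is a $t$-adically convergent power series in the $g_i$, not a polynomial. Your claim $R=\mathbb{C}[g_1,\ldots,g_m]$ fails already for $R=\mathbb{C}[[t^2,t^3]]$ with the SAGBI basis $G=\{t^2,t^3\}$: the element $f=\sum_{k\ge 2}t^k$ lies in $R$ (indeed in $(t^2)$), but any polynomial in $t^2,t^3$ is a polynomial in $t$, so $f\notin\mathbb{C}[t^2,t^3]$. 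Accordingly, each of your proposed escapes is flawed: the subduction remainders have strictly increasing order and in general never become $0$ at a finite stage; ``$(t^c)\subseteq R$'' does not give $(t^c)\subseteq\mathbb{C}[g_1,\ldots,g_m]$ (the ideal $(t^c)$ consists of power series, and even $t^c,t^{c+1},\ldots$ need not be polynomials in the $g_i$); and Noetherianity of $R$ or stabilization of a chain of subalgebras has no bearing on whether one particular element admits a finite expression. Truncating modulo $(t^N)$ and letting $N\to\infty$ likewise only produces a limit of polynomials, i.e.\ a power series in the $g_i$.

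The repair is simply to drop the finiteness claim and let the subduction run: at each step choose $(\beta_1,\ldots,\beta_m)$ with $\mathrm{LT}(f)=\mathrm{LT}(g_1^{\beta_1}\cdots g_m^{\beta_m})$ (possible by the SAGBI property), subtract the appropriate scalar multiple, and note that the order strictly increases, so the partial sums converge $t$-adically to $f$ and exhibit $f\in\mathbb{C}[[g_1,\ldots,g_m]]$. This is exactly the mechanism of the Division Algorithm given later in the paper, which is explicitly allowed to proceed through infinitely many steps and produces coefficients $q_i\in\mathbb{C}[[g_1,\ldots,g_m]]$; the theorem you are proving should be read (and proved) in that completed sense, since the paper itself offers no separate proof beyond noting that it follows from Definition \ref{Def of SAGBI}.
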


%\begin{remark}
%If $R=\mathbb{C}[[t^{a_1},\ldots,t^{a_m}]]$ where $a_1,\ldots,a_s$ are positive integers, then $G=\{t^{a_1},\ldots,t^{a_m}\}$ %is a SAGBI basis of $R$. 
%\end{remark}

\begin{Def}\label{Def of SB}
Let $G$ be a SAGBI basis for $R$ and let $H$ be a subset of $M$. 
The pair $(G,H)$ is called a \emph{standard basis} of $M$ if,  for any $f\in M$, there exist $h\in H$ 
and a multi-index $(\beta_1, \ldots,\beta_m)\in \mathbb{Z}^m_{\ge 0}$
such that $\mathrm{\small LT}(f)=\mathrm{\small LT}(g_1^{\beta_1}\cdots g_m^{\beta_m}h)$.
\end{Def}

Similar to the above, the following theorem holds:

\begin{thm}
Let  $(G,H)$  be a standard basis of $M$. 
The set $H$ generates $M$ as $R$-module.
\end{thm}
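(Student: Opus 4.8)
The plan is to show that $H$ generates $M$ as an $R$-module by an ordered-induction (or Noetherian-induction) argument on the $t$-valuation, exactly parallel to the classical proof that a Gröbner basis generates an ideal. Concretely, fix any $f\in M$; I want to express $f$ as an $R$-linear combination of elements of $H$. The key observation is that the definition of standard basis gives, for this $f$, an element $h\in H$ and a multi-index $(\beta_1,\dots,\beta_m)$ with $\mathrm{LT}(f)=\mathrm{LT}(g_1^{\beta_1}\cdots g_m^{\beta_m}h)$. Since $g_1^{\beta_1}\cdots g_m^{\beta_m}\in R$ (the $g_i$ generate $R$ as a $\mathbb{C}$-algebra, so monomials in them lie in $R$), the element $r_0:=\lambda\, g_1^{\beta_1}\cdots g_m^{\beta_m}$ for the appropriate scalar $\lambda=\mathrm{LC}(f)/\mathrm{LC}(g_1^{\beta_1}\cdots g_m^{\beta_m}h)\in\mathbb{C}^\times$ satisfies $\nu(f-r_0 h)>\nu(f)$, because the leading terms cancel. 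Note $f-r_0h\in M$ since $M$ is an $R$-module.

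The heart of the argument is then the termination/convergence step. Set $f_1:=f-r_0h$; repeating the construction produces $f_2, f_3,\dots$ with strictly increasing valuations $\nu(f)<\nu(f_1)<\nu(f_2)<\cdots$, and at each stage $f = r_0 h + r_1 h' + \cdots$ plus a remainder of ever-higher order. In the completed setting $\mathbb{C}[[t]]$ this process converges $t$-adically: the partial sums $\sum_{k\ge 0} r_k h_k$ form a Cauchy sequence whose limit is $f$. To package this as a genuine finite $R$-linear combination I would group the contributions by which element of $H$ is used: for each $h\in H$, the coefficients $r_k$ associated with that $h$ sum ($t$-adically) to an element of $R$, using that $R$ is complete (it is a subring of $\mathbb{C}[[t]]$ with finite colength, hence closed, hence complete in the $t$-adic topology). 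Summing over the finitely many $h\in H$ gives $f\in\sum_{h\in H} R\,h$, which is the claim. Alternatively, if one only wants generation up to the relevant finite level (as is really needed for the Hilbert-scheme application, where everything takes place modulo $(t^{N})$ for suitable $N$), the induction terminates after finitely many steps outright and no completeness argument is needed.

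The main obstacle — or rather the main point requiring care — is the passage from the infinite $t$-adic expansion to an honest finite-sum expression with coefficients in $R$: one must check that the relevant partial sums of coefficients actually lie in $R$ and not merely in $\mathbb{C}[[t]]$, which is where completeness/closedness of $R$ enters. A secondary subtlety is ensuring that $H$ may be taken finite (or that the sum over $h\in H$ is finite), so that "generates as $R$-module" is meaningful in the intended sense; in the applications $H$ is finite by construction, so I would either assume $H$ finite or note that only finitely many $h\in H$ can occur as leading-term contributors below a given valuation. Apart from these points, the proof is a routine division-algorithm argument, structurally identical to the preceding theorem that a SAGBI basis generates $R$ as a $\mathbb{C}$-algebra.
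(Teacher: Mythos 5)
Your argument is correct and coincides with the paper's implicit reasoning: the paper proves this statement via the same division-algorithm idea (successive cancellation of leading terms by monomials $g_1^{\beta_1}\cdots g_m^{\beta_m}h$, with strictly increasing valuation and $t$-adic convergence, the quotients landing in $\mathbb{C}[[g_1,\ldots,g_m]]\subseteq R$), which is exactly your construction. Your added remarks on closedness of $R$ in $\mathbb{C}[[t]]$ and finiteness of $H$ are sound and only make explicit what the paper leaves tacit.
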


The following proposition follows from Definition\,\ref{Def of SB}. 
\begin{prp}\label{referee's prp2}
Let $G=\{g_1,\ldots,g_m\}\subset R$ be a SAGBI basis for $R$ and let $H=\{h_1,\ldots,h_n\}$ be a subset of $M$. 
Then $(G,H)$ is  a standard basis of $M$ if and only if 
$\Gamma=\langle \nu(g_1),\ldots, \nu(g_m)\rangle$ 
and  $\Gamma(M)=\langle \nu(h_1),\ldots, \nu(h_n)\rangle _{\Gamma}$. 
\end{prp}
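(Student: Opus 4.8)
The plan is to prove Proposition \ref{referee's prp2} by unwinding Definition \ref{Def of SB} and translating the statement about leading terms into a statement about valuations. Recall that for a SAGBI basis $G=\{g_1,\ldots,g_m\}$ of $R$ and a subset $H=\{h_1,\ldots,h_n\}$ of $M$, the condition $\mathrm{\small LT}(f)=\mathrm{\small LT}(g_1^{\beta_1}\cdots g_m^{\beta_m}h_j)$ for some multi-index $(\beta_1,\ldots,\beta_m)$ and some $h_j\in H$ is equivalent to the numerical condition $\nu(f)=\beta_1\nu(g_1)+\cdots+\beta_m\nu(g_m)+\nu(h_j)$, because $\nu$ is a valuation on $\mathbb{C}[[t]]$ (i.e. $\nu(ab)=\nu(a)+\nu(b)$) and two elements have the same leading term precisely when they have the same valuation and the same leading coefficient; here the leading coefficient can always be matched by rescaling, so only the valuation matters. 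Thus $(G,H)$ being a standard basis says exactly that every element of $\nu(M)=\Gamma(M)$ can be written as $\sum_i\beta_i\nu(g_i)+\nu(h_j)$ with $\beta_i\in\mathbb{Z}_{\ge 0}$, i.e. $\Gamma(M)=\bigcup_{j}\left(\nu(h_j)+\langle\nu(g_1),\ldots,\nu(g_m)\rangle\right)$.

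The forward direction then proceeds as follows. Assume $(G,H)$ is a standard basis of $M$. Applying Definition \ref{Def of SB} to arbitrary $f\in R\subseteq M$ shows in particular that $G$ is a SAGBI basis of $R$, hence (by the quoted theorem) $G$ generates $R$ as a $\mathbb{C}$-algebra; since $\nu$ turns products into sums, every valuation $\nu(f)$ for $f\in R$ lies in the numerical semigroup generated by the $\nu(g_i)$, giving $\Gamma=\langle\nu(g_1),\ldots,\nu(g_m)\rangle$. For the semimodule statement, the displayed equality $\Gamma(M)=\bigcup_j(\nu(h_j)+\Gamma)$ derived above is precisely the assertion $\Gamma(M)=\langle\nu(h_1),\ldots,\nu(h_n)\rangle_\Gamma$, once one notes that $\nu(h_j)\in\Gamma(M)$ for each $j$ (as $h_j\in M$) so the union is genuinely a $\Gamma$-semimodule contained in and equal to $\Gamma(M)$.

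For the converse, assume $\Gamma=\langle\nu(g_1),\ldots,\nu(g_m)\rangle$ and $\Gamma(M)=\langle\nu(h_1),\ldots,\nu(h_n)\rangle_\Gamma$. Given $f\in M$ with $f\ne 0$, its valuation $\nu(f)$ lies in $\Gamma(M)$, so by the semimodule generation there is an index $j$ and an element $\gamma\in\Gamma$ with $\nu(f)=\nu(h_j)+\gamma$; by the semigroup generation $\gamma=\sum_i\beta_i\nu(g_i)$ for suitable $\beta_i\in\mathbb{Z}_{\ge 0}$. Then $g_1^{\beta_1}\cdots g_m^{\beta_m}h_j$ has valuation $\sum_i\beta_i\nu(g_i)+\nu(h_j)=\nu(f)$, hence the same leading term as $f$ up to a nonzero scalar; absorbing that scalar (we may rescale $h_j$, or simply observe Definition \ref{Def of SB} only constrains leading terms and the scalar does not affect which monomial $t^{\nu(f)}$ appears — if the definition is read strictly, replace $h_j$ by a scalar multiple, which does not change $H$ as a generating set for the argument) shows the standard-basis condition is met. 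The case $f=0$ is trivial since $\nu(0)=\infty$ matches $\nu$ of any product involving a zero factor. This establishes that $(G,H)$ is a standard basis of $M$.

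The only genuinely delicate point is the bookkeeping around leading coefficients versus leading terms: one must be careful that ``same leading term'' in Definition \ref{Def of SB} is equivalent to ``same valuation'' after the harmless normalization of leading coefficients, and that the chosen $h_j$ really does appear in $H$ rather than merely a scalar multiple of it. I expect this to be the main (very minor) obstacle; everything else is a direct translation through the valuation map, using only that $\nu$ is additive on products and the already-stated theorems that SAGBI/standard bases generate $R$ as a $\mathbb{C}$-algebra and $M$ as an $R$-module respectively.
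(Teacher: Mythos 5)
Your argument is correct and is essentially the paper's own (implicit) proof: the paper merely asserts that the proposition follows from Definition \ref{Def of SB}, and your unwinding of the leading-term condition through the additivity of $\nu$ (with the sensible reading of leading terms up to nonzero scalars, a looseness already present in the paper's Definitions \ref{Def of SAGBI} and \ref{Def of SB}) is exactly that verification. One small correction: do not argue via $R\subseteq M$, which is not part of the hypotheses; the equality $\Gamma=\langle\nu(g_1),\ldots,\nu(g_m)\rangle$ follows directly from the standing assumption that $G$ is a SAGBI basis of $R$, by the same valuation argument applied to Definition \ref{Def of SAGBI}.
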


\begin{thm}
Let $G$ and $H$ be as in Proposition {\rm \ref{referee's prp2}}. 
If $(G,H)$ is a standard basis of $M$, then any $f\in \mathbb{C}[[t]]$ can be written as 
\begin{equation}\label{Division}
f=q_1h_1+\cdots+q_nh_n+r
\end{equation}
where $q_1,\ldots,q_n\in\mathbb{C}[[g_1,\ldots,g_m]]$ and
$
r=\sum_{j\notin \Gamma(M)}c_jt^{j}. 
$
\end{thm}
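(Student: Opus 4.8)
The plan is to mimic the classical division algorithm for Gröbner/standard bases, using the local order $\nu(1)\succ\nu(t)\succ\cdots$ and the fact that $(G,H)$ being a standard basis gives us enough leading terms to reduce against. Given $f\in\mathbb{C}[[t]]$, I would construct the $q_i$ and $r$ by a (possibly transfinite, but convergent in the $t$-adic topology) reduction process. At each stage, having a current remainder $\tilde f$ with leading term $ct^j$: if $j\in\Gamma(M)$, then by Proposition~\ref{referee's prp2} we have $j=\nu(h_i)+\sum_k \beta_k\nu(g_k)$ for some $i$ and some multi-index, so $\mathrm{LT}(\tilde f)=\mathrm{LT}\!\big(c'\,g_1^{\beta_1}\cdots g_m^{\beta_m}h_i\big)$ for a suitable scalar $c'$; subtract this monomial multiple of $h_i$ from $\tilde f$ and add the corresponding term $c'g_1^{\beta_1}\cdots g_m^{\beta_m}$ to $q_i$. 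If $j\notin\Gamma(M)$, move the term $ct^j$ into the remainder $r$ and continue with $\tilde f-ct^j$. In either case the valuation of the working element strictly increases, so the construction proceeds through successively higher powers of $t$.

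Next I would argue convergence. Since each reduction step strictly raises the $t$-order of the working remainder, the partial sums defining each $q_i$ and the partial sums defining $r$ are Cauchy in the $t$-adic topology on $\mathbb{C}[[t]]$ (and on $\mathbb{C}[[g_1,\ldots,g_m]]$, which is complete because $\dim_{\mathbb{C}}\mathbb{C}[[t]]/R<\infty$ forces $R$ to be a complete local ring). Hence the limits $q_i\in\mathbb{C}[[g_1,\ldots,g_m]]$ and $r\in\mathbb{C}[[t]]$ exist, and passing to the limit in the identity $f=\sum_i q_i^{(N)}h_i+r^{(N)}+(\text{working remainder of order}\ge N)$ gives $f=q_1h_1+\cdots+q_nh_n+r$. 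By construction every term surviving into $r$ has exponent $j\notin\Gamma(M)$, so $r=\sum_{j\notin\Gamma(M)}c_jt^j$ as claimed.

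The main obstacle — and the point that needs the most care — is the bookkeeping of the reduction order: because the order is local (lowest $t$-power is "leading"), reduction increases rather than decreases the relevant invariant, so one is really performing an infinite procedure and must be precise about why it converges and why the procedure is well-defined even though the multi-index $(\beta_1,\ldots,\beta_m)$ realizing a given $j\in\Gamma(M)$ need not be unique (any consistent choice works). I would also note that $\Gamma(M)=\langle\nu(h_1),\ldots,\nu(h_n)\rangle_\Gamma$ is exactly what guarantees that the "move to remainder" branch is only taken for $j\notin\Gamma(M)$, and that $\Gamma=\langle\nu(g_1),\ldots,\nu(g_m)\rangle$ is what guarantees the needed multi-index exists on the semigroup side — both supplied by Proposition~\ref{referee's prp2}. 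A brief remark that the $q_i$ and $r$ need not be unique (only their leading behaviour is controlled) rounds out the argument, though uniqueness is not asserted in the statement.
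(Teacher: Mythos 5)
Your proposal is correct and follows essentially the same route as the paper: the paper's proof is exactly this division algorithm (reduce by $g_1^{\beta_1}\cdots g_m^{\beta_m}h_l$ when $\nu(p)\in\Gamma(M)$, move the leading term to $r$ otherwise), run for possibly infinitely many steps, with convergence justified by the strict increase of the valuation at each step. Your additional remarks on $t$-adic completeness and non-uniqueness of the multi-index are sensible elaborations of the same argument.
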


We call $r$ in (\ref{Division}) the \emph{reduction} of $f$ modulo $(G,H)$, and denote it by $R(f,G,H)$.

\vspace{-1mm}
\noindent
\begin{proof}
We claim that the following algorithm yields (\ref{Division}):\\
\vspace{-3mm}

\noindent
\textbf{Division Algorithm}\\
\texttt{Input:} $f\in R$, $G=\{g_1,\ldots,g_m\}$, $H=\{h_1,\ldots,h_n\}$\\
\texttt{Output:} $q_1,\ldots,q_n$, $r$
\vspace{1mm}

\noindent
\texttt{Define:} $q_1:=0,\ldots,q_n:=0$, $r=0$, $p:=f$\\
\texttt{WHILE} $p\neq0$ \texttt{DO}\\
\quad If $\nu(p)\in\Gamma(M)$, \texttt{THEN} find smallest $l$ such that \\
\quad $\mathrm{\small LT}(p)=\mathrm{\small LT}(g_1^{\alpha_1}\cdots g_m^{\beta_m}h_l)\text{ for some  }(\beta_1, \ldots,\beta_m)\in \mathbb{Z}^m_{\ge 0}$\\
\qquad $q_l:=q_l+g_1^{\beta_1}\cdots g_m^{\beta_m}$\\
\qquad $p:=p-g_1^{\beta_1}\cdots g_m^{\beta_m}h_l$\\
\qquad $r:=r$\\
\quad \texttt{ELSE}\\
\qquad $p:=p-\mathrm{\small LT}(p)$\\
\qquad $r:=r+\mathrm{\small LT}(f)$\\
\vspace{-3mm}

\noindent
We allow the algorithm to proceed infinitely many steps. 
Since the order $\nu(p)$ strictly increases at every step, $p$ converges to 0. 
%This algorithm eventually terminates. 
It is also obvious that the orders of all terms in the final reduction $r$ are not in $\Gamma(M)$. 
For further details, refer to the proof of Theorem 3 in Chapter 2, Section 3 of \cite{CLO}. 
\end{proof}

The reduction $R(f,G,H)$ has the following property:

\begin{lem}
Let $(G, H)$ be a standard basis of $M$. 
For any $f\in M$, the reduction $R(f,G,H)$ is unique 
%on the division of $f$ module $(G,H)$ 
no matter how the elements of $G$ and $H$ are listed. 
\end{lem}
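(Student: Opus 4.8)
## Proof Proposal

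The plan is to prove uniqueness of the reduction $R(f,G,H)$ by the standard argument: show that the difference of two purported reductions must be zero. Suppose $f \in M$ admits two expressions as in (\ref{Division}), say
\[
f = q_1 h_1 + \cdots + q_n h_n + r = q_1' h_1 + \cdots + q_n' h_n + r'
\]
with $q_i, q_i' \in \mathbb{C}[[g_1,\ldots,g_m]]$ and $r = \sum_{j \notin \Gamma(M)} c_j t^j$, $r' = \sum_{j \notin \Gamma(M)} c_j' t^j$. Subtracting, we obtain $r - r' = \sum_i (q_i' - q_i) h_i \in M$. The left-hand side is a power series all of whose exponents lie outside $\Gamma(M)$, while the right-hand side lies in $M$, so its valuation (if nonzero) lies in $\Gamma(M) = \Gamma(M)$.

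The key step is the following: since $(G,H)$ is a standard basis of $M$, any nonzero element $h \in M$ has $\nu(h) \in \Gamma(M)$, because by Definition \ref{Def of SB} its leading term matches $\mathrm{\small LT}(g_1^{\beta_1}\cdots g_m^{\beta_m} h_l)$ for some $l$, whence $\nu(h) = \beta_1\nu(g_1) + \cdots + \beta_m\nu(g_m) + \nu(h_l) \in \Gamma(M)$. Now $r - r' \in M$: indeed $r - r' = \sum_i(q_i' - q_i)h_i$ is an $R$-linear combination of the $h_i$ (using $\mathbb{C}[[g_1,\ldots,g_m]] \subseteq R$, which holds since $G$ is a SAGBI basis for $R$), hence lies in $M$. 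If $r - r' \neq 0$, then on one hand $\nu(r-r') \in \Gamma(M)$, and on the other hand $\nu(r - r')$ is the exponent of some term appearing in $r$ or $r'$, hence lies outside $\Gamma(M)$ — a contradiction. Therefore $r = r'$, which is exactly the asserted uniqueness. Finally, the independence from the ordering of the elements of $G$ and $H$ is immediate once uniqueness is established: any choice of ordering in the Division Algorithm produces some valid expression of the form (\ref{Division}), and all such expressions share the same remainder $r$.

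I expect the main (and only real) obstacle to be a bookkeeping subtlety rather than a conceptual one: one must make sure that the remainders $r, r'$ are genuinely convergent power series supported off $\Gamma(M)$ and that termwise subtraction behaves well, so that "the valuation of $r - r'$ is an exponent occurring in $r$ or $r'$" is legitimate. This is handled by noting that the set of exponents occurring in either $r$ or $r'$ is contained in $\mathbb{Z}_{\ge 0} \setminus \Gamma(M)$, and the smallest such exponent with nonzero coefficient in $r - r'$ is $\nu(r-r')$; since a single exponent cannot simultaneously lie in $\Gamma(M)$ and its complement, the contradiction is clean. No further input is needed beyond Definition \ref{Def of SB}, Proposition \ref{referee's prp2}, and the inclusion $\mathbb{C}[[g_1,\ldots,g_m]] \subseteq R$.
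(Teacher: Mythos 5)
Your argument is correct and is essentially the same as the paper's intended proof: the paper omits the details and refers to Proposition 1 in Chapter 2, Section 6 of \cite{CLO}, whose proof is exactly this cancellation argument (subtract two expressions, note $r-r'\in M$, and compare $\nu(r-r')$ with the supports of $r$ and $r'$). One small simplification: since $\Gamma(M):=\nu(M)$ by definition, the fact that a nonzero $r-r'\in M$ has $\nu(r-r')\in\Gamma(M)$ is immediate and does not require invoking Definition \ref{Def of SB}.
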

\begin{proof}
The proof of this lemma is similar to that of Proposition 1 in Chapter 2, Section 6 of \cite{CLO}. 
So we omit it.     
\end{proof}
For $f_1,f_2\in M$, there exist multi-indices $(\beta_1,\ldots,\beta_m), (\gamma_1,\ldots,\gamma_m)\in \mathbb{Z}^m_{\ge 0}$ and elements $h_1,h_2\in H$ such that $\mathrm{\small LT}(f_1)=\mathrm{\small LT}(g_1^{\beta_1}\cdots g_m^{\beta_m}h_1)$ and 
$\mathrm{\small LT}(f_2)=\mathrm{\small LT}(g_1^{\gamma_1}\cdots g_m^{\gamma_m}h_2)$. 
The $S$-\emph{process} for $f_1,f_2\in M$ is defined as 
\begin{equation*}
S(f_1,f_2):=g_1^{\gamma_1}\cdots g_m^{\gamma_m}h_2 f_1- g_1^{\beta_1}\cdots g_m^{\beta_m}h_1 f_2.
\end{equation*}

\noindent
Among all such expressions, we define the one with minimal order as the \emph{minimal}  $S$-\emph{process} of $f_1$ and $f_2$, denoted by $S_{\mathrm{min}}(f_1,f_2)$.  

\begin{prp}[\cite{HH}, Theorem\,2.3]\label{referee's prp}
Let $G=\{g_1,\ldots,g_m\}\subset R$ be a SAGBI basis for $R$ and let $H=\{h_1,\ldots,h_n\}$ be a subset of $M$. 
The pair $(G,H)$ is  a standard basis of $M$ if and only if $R(S_{\mathrm{min}}(h_i,h_j), G,H)=0$ for all $h_i,h_j\in H$. 
\end{prp}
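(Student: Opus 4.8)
The plan is to follow the classical Buchberger-criterion template, transported to the valuation order $\nu(1)\succ\nu(t)\succ\nu(t^2)\succ\cdots$ on $\mathbb{C}[[t]]$. By Proposition~\ref{referee's prp2}, and since $G$ being a SAGBI basis already forces $\Gamma=\langle\nu(g_1),\ldots,\nu(g_m)\rangle$, the assertion ``$(G,H)$ is a standard basis of $M$'' is equivalent to ``$\Gamma(M)=\langle\nu(h_1),\ldots,\nu(h_n)\rangle_\Gamma$''; this reformulation is what I would verify throughout, since the inclusion $\langle\nu(h_i)\rangle_\Gamma\subseteq\Gamma(M)$ is automatic from $h_i\in M$ and $\nu$ being multiplicative.

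For the ``only if'' direction I would argue directly. Given $h_i,h_j\in H$, the element $S_{\min}(h_i,h_j)$ is by construction an $R$-linear combination of $h_i$ and $h_j$, hence lies in $M$; by the division identity $(\ref{Division})$ its reduction $r:=R(S_{\min}(h_i,h_j),G,H)$ equals $S_{\min}(h_i,h_j)-\sum_l q_l h_l$ and therefore also lies in $M$, while every term of $r$ has order outside $\Gamma(M)$. If $r\neq 0$ then $\nu(r)\in\nu(M)=\Gamma(M)$, a contradiction, so $r=0$. I expect no difficulty here.

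The substance is the ``if'' direction. Assume $R(S_{\min}(h_i,h_j),G,H)=0$ for all $i,j$, fix $f\in M\setminus\{0\}$, and aim to show $\nu(f)\in\langle\nu(h_1),\ldots,\nu(h_n)\rangle_\Gamma$. Writing $f$ as an $R$-combination of the $h_i$ (possible since $H$ generates $M$) and expanding each coefficient in $g$-monomials via the SAGBI property, I would obtain an expansion $f=\sum_k c_k u_k h_{i_k}$ with $c_k\in\mathbb{C}^\times$ and each $u_k$ a monomial in $g_1,\ldots,g_m$, and then choose among all such expansions one for which $\mu:=\min_k\nu(u_k h_{i_k})$ is maximal; this maximum exists because $\mu$ ranges over the finite set $\{0,1,\ldots,\nu(f)\}$. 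The crux is the claim $\mu=\nu(f)$: if $\mu<\nu(f)$, the order-$\mu$ contributions cancel, so $\sum_{k:\,\nu(u_k h_{i_k})=\mu}c_k\,\mathrm{LT}(u_k h_{i_k})=0$. Rewriting this cancelling sum as a telescoping sum of pairwise differences, each difference is (up to a unit) a monomial multiple of some $S_{\min}(h_{i_k},h_{i_{k'}})$, and the hypothesis together with $(\ref{Division})$ lets me replace it by an $R$-combination of the $h_l$ all of whose terms have order $>\mu$. The resulting new expansion of $f$ then has minimal term-order strictly greater than $\mu$, contradicting maximality. Hence $\mu=\nu(f)$, and picking $k$ with $\nu(u_k h_{i_k})=\nu(f)$ gives $\nu(f)=\sum_j\beta_{k,j}\nu(g_j)+\nu(h_{i_k})\in\langle\nu(h_i)\rangle_\Gamma$.

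I expect the main obstacle to be precisely that cancellation step: one must show that every cancellation among the lowest-order terms is ``generated'' by the minimal $S$-processes (the valuation-order analogue of the lcm/syzygy lemma in Buchberger's theorem), and that feeding in the vanishing reductions strictly raises the order. Two points need care there: that $S_{\min}(h_i,h_j)$ is well defined as a single minimal-order $S$-process, which uses the structure of the $\Gamma$-semimodule $(\nu(h_i)+\Gamma)\cap(\nu(h_j)+\Gamma)$, and that the $S$-process of two monomial multiples $u_i h_i$, $u_j h_j$ is always a monomial multiple of $S_{\min}(h_i,h_j)$ modulo a unit. For these and the remaining leading-coefficient bookkeeping I would follow \cite{HH}, together with the parallel treatment of Buchberger's criterion in Chapter~2 of \cite{CLO}.
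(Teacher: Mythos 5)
The paper itself offers no proof of this proposition (it is quoted from \cite{HH}, Theorem 2.3, and is applied in Section \ref{Proof} only in combination with Lemma \ref{Syzygies}), so your proposal has to stand on its own; and it breaks at exactly the step you flag as the crux. Your ``only if'' direction is fine, but in the ``if'' direction the valuation analogue of the lcm lemma that you invoke --- ``the $S$-process of two monomial multiples $u_ih_i$, $u_jh_j$ is always a monomial multiple of $S_{\mathrm{min}}(h_i,h_j)$ modulo a unit'' --- is false. The levels at which a cancellation between $g$-monomial multiples of $h_i$ and $h_j$ can occur form the $\Gamma$-semimodule $(\nu(h_i)+\Gamma)\cap(\nu(h_j)+\Gamma)$, and unlike the set of common multiples of two monomials in the polynomial case, this semimodule is in general not generated by its minimal element. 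For instance, with $\Gamma=\langle 3,4\rangle$, $\nu(h_i)=0$, $\nu(h_j)=2$, one has $(\nu(h_i)+\Gamma)\cap(\nu(h_j)+\Gamma)=(6+\Gamma)\cup(8+\Gamma)$; a cancellation at level $8$ (say between $g_2^2h_i$ and $g_1^2h_j$ with $\nu(g_1)=3$, $\nu(g_2)=4$) cannot be rewritten as a $g$-monomial multiple of the level-$6$ minimal $S$-process because $8-6=2\notin\Gamma$, so the hypothesis $R(S_{\mathrm{min}}(h_i,h_j),G,H)=0$ provides no standard representation to substitute, and your ``strictly raise $\mu$'' induction stalls. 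What the literature (and the paper's actual use of the statement) relies on instead is control of all cancellations through a generating set of the syzygies of the leading terms $t^{\nu(h_1)},\ldots,t^{\nu(h_n)}$ over $\mathbb{C}[[\Gamma]]$: per pair this means an $S$-process for each minimal generator of the intersection semimodule (in the paper's application these are exactly the syzygies $v_0,\ldots,v_{p-1}$ of Lemma \ref{Syzygies}), not only the one of minimal order. So as written your argument does not establish the criterion; you must either enlarge the family of $S$-processes tested, or add a genuine argument showing that the non-minimal generators of the pairwise intersections are consequences of the tested ones in the setting at hand.

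A second, smaller issue: in the ``if'' direction you expand $f$ as an $R$-combination of the $h_i$ ``since $H$ generates $M$,'' but generation is not among the stated hypotheses ($H$ is only a subset of $M$), and without it the implication is false --- a non-generating subset can satisfy all the $S$-process conditions while $\Gamma(M)\neq\langle\nu(h_1),\ldots,\nu(h_n)\rangle_\Gamma$, by Proposition \ref{referee's prp2}. This assumption is implicit in \cite{HH} and should be made explicit in your write-up; together with the point above, it is where your Buchberger-template transcription diverges from what is actually needed in this valuation-order setting.
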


%Section 5%%%%%%%%%%%%%%%%%%%%%%%%%%%%%%%%%%%%%%%%%%%%%%%%%%%%%%%%%%%%%%%%%%%%
\section{Proofs of the Main Results}\label{Proof}
As noted in Remark \ref{Remark 1}, Piontkowski \cite{P} originally proved Theorem $\ref{Watari's result 1}$, Corollary $\ref{Watari's result 2}$, and Theorem $\ref{Watari's result 3}$ in the case $r\ge c$. 
The following result about syzygies was used in his proof:
\begin{lem}[\cite{P}, Proposition 5]\label{Syzygies}
Let $\Gamma=\langle p, q\rangle$, and let $\Delta=\cup^{p-1}_{i=0}(a_i+p\mathbb{N})$ be a $\Gamma$-semimoudlue with $p$-basis $\{a_0,\ldots,a_{p-1}\}$. 
Consider a graded algebra $\mathbb{C}[\Gamma]:=\bigoplus_{\gamma\in \Gamma}\mathbb{C}t^\gamma$,  %\mathrm{Span}[t^{\gamma}|\,\gamma\in \Gamma]$, 
and  a graded $\mathbb{C}[\Gamma]$-algebra $\mathbb{C}[\Delta]:=\bigoplus_{\gamma\in \Delta}\mathbb{C}t^\gamma$
%$\mathbb{C}[\Delta]:=\mathrm{Span}[t^{\gamma}|\,\gamma\in \Delta]$ 
 generated by $(t^{a_0},\ldots,t^{a_{p-1}})$.
Then the syzygies of this $p$-tuple  are minimally generated by the following vectors$:$
\begin{align*}
v_0:&=(t^q,-t^{\alpha_1p},0,\ldots,0),\\
v_1:&=(0,t^q,-t^{(\alpha_2-\alpha_1)p},0,\ldots,0),\\
&\ \, \vdots\\
v_{p-2}:&=(0,\ldots,0,t^q,-t^{(\alpha_{p-1}-\alpha_{p-2})p}),\\
v_{p-1}:&=(-t^{(q-\alpha_{p-1})p},0,\ldots,0,t^q)
\end{align*} 
where $\alpha_1,\ldots,\alpha_{p-1}$ are numbers given in $(\ref{alphas})$.
\end{lem}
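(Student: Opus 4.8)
The plan is to verify directly that the listed vectors $v_0,\ldots,v_{p-1}$ span the syzygy module of the $p$-tuple $(t^{a_0},\ldots,t^{a_{p-1}})$ over $\mathbb{C}[\Gamma]$, and then show this generating set is minimal. First I would observe that each $v_k$ is indeed a syzygy: using $a_i = iq-\alpha_i p$ from $(\ref{alphas})$, the $k$-th vector $v_k$ for $k\le p-2$ encodes the relation $t^q\cdot t^{a_k} = t^{(\alpha_{k+1}-\alpha_k)p}\cdot t^{a_{k+1}}$, which holds because $a_k+q = (k+1)q-\alpha_k p = a_{k+1}+(\alpha_{k+1}-\alpha_k)p$; similarly $v_{p-1}$ encodes $t^q\cdot t^{a_{p-1}} = t^{(q-\alpha_{p-1})p}\cdot t^{a_0}$, valid since $a_{p-1}+q = pq - \alpha_{p-1}p = a_0 + (q-\alpha_{p-1})p$. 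One must also check these exponents are genuinely in $\Gamma=\langle p,q\rangle$: the entries $t^q$ are fine, and the entries $t^{(\alpha_{k+1}-\alpha_k)p}$, $t^{(q-\alpha_{p-1})p}$ lie in $\Gamma$ because $0\le\alpha_1\le\cdots\le\alpha_{p-1}<q$ forces all the stated exponents to be non-negative multiples of $p$.

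Next I would prove these syzygies generate everything. The module $\mathbb{C}[\Delta]$ is $\mathbb{Z}$-graded, and in degree $d$ the generators contributing are exactly the $t^{a_i}$ with $a_i\le d$ and $a_i\equiv d\pmod p$; since the $p$-basis elements occupy distinct residue classes mod $p$, at most one index $i$ contributes in each residue class, so the graded piece of the free module surjecting onto $\mathbb{C}[\Delta]_d$ is spanned by monomials $t^{d-a_i}e_i$ over those $i$ with $d-a_i\in\Gamma$. A homogeneous syzygy of degree $d$ is then a $\mathbb{C}$-linear relation among the $t^{a_i}$ (all equal to $t^d$ after multiplication), i.e. a vector in the kernel of the "evaluation" map sending $e_i\mapsto 1$ restricted to the contributing indices. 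The key combinatorial point is that for a fixed residue class $j \bmod p$, the set of $d$ with $d\equiv jq$ and $d-a_i\in\Gamma$ for two indices $i\ne i'$ in that class is empty (only one $p$-basis element per class), so the only way two components interact is via the exponent-$q$ "shift" which moves between consecutive residue classes $iq$ and $(i+1)q$; chasing this shows every homogeneous syzygy reduces, modulo the $\mathbb{C}[\Gamma]$-span of the $v_k$, to zero. Concretely, given a homogeneous syzygy, I would use $v_0$ to kill its $e_0$-component, then $v_1$ to kill the resulting $e_1$-component, and so on around the cycle, with $v_{p-1}$ closing it up; the cyclic structure of the $v_k$ (each linking class $i$ to class $i+1 \bmod p$) is exactly what makes this elimination terminate.

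Then I would address minimality. The $v_k$ are homogeneous of degrees $d_k := a_k+q$ for $k\le p-2$ and $d_{p-1}:=a_{p-1}+q = pq$. Minimality of a homogeneous generating set of a graded module over the graded local-type ring $\mathbb{C}[\Gamma]$ (whose irrelevant ideal is $\bigoplus_{\gamma>0}\mathbb{C}t^\gamma$) means no $v_k$ lies in the submodule generated by the others together with $\mathfrak{m}\cdot(\text{all }v_j)$; equivalently, the images of the $v_k$ in the quotient by $\mathfrak{m}$ times the syzygy module are $\mathbb{C}$-independent. Here I would note each $v_k$ has a "pure" monomial entry in position $k+1$ (namely $t^q$, or for $v_{p-1}$ the entry $-t^{(q-\alpha_{p-1})p}$ in position $0$ together with $t^q$ in position $p-1$) that cannot be produced as a $\mathbb{C}[\Gamma]$-combination of the other $v_j$'s with positive-order coefficients, because the degree bookkeeping $d_0<d_1<\cdots<d_{p-2}$, combined with the fact that $v_{p-1}$ is the unique syzygy whose first coordinate is nonzero, pins down each $v_k$ as an indispensable generator. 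Alternatively, and more cleanly, I would invoke that $\Delta$ (being a $\Gamma$-semimodule with $\Gamma=\langle p,q\rangle$) has exactly $p$ generators in its $p$-basis, so by the Hilbert–Burch / Auslander–Buchsbaum count for this numerical-semigroup-module situation the minimal number of syzygies is $p$ as well; since we have exhibited $p$ generating syzygies, they are automatically minimal.

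The main obstacle I anticipate is the generation step — verifying that the cyclic elimination using $v_0,\dots,v_{p-1}$ actually terminates and captures all homogeneous syzygies, rather than leaving a residual relation "wrapping around" the cycle. The delicate point is that after eliminating components $e_0,\ldots,e_{p-2}$ in order one is left with a multiple of $e_{p-1}$, and one must check this residual multiple is forced to vanish (not merely reducible by $v_{p-1}$ up to another wrap-around); this is where the arithmetic constraint $\alpha_{p-1}<q$, i.e. that the total "twist" around the cycle is exactly one factor of $t^{pq}$ and not more, is essential. I expect this to require a careful degree/order estimate rather than any deep idea, but it is the place where an honest argument has to be given rather than hand-waved.
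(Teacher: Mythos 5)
Note first that the paper itself offers no proof of this lemma: it is quoted directly from Piontkowski (\cite{P}, Proposition 5), so your attempt has to be judged against his argument, not against anything in the text. Your verification that each $v_k$ really is a syzygy with entries in $\mathbb{C}[\Gamma]$ is correct, but the substance of the statement is that these $p$ vectors \emph{generate} the syzygy module, and there you have only announced a plan — you yourself flag that the termination of the cyclic elimination ``has to be given rather than hand-waved.'' Moreover, the mechanism you sketch is not right as stated. In a fixed degree $d$ the contributing indices are those $i$ with $d-a_i\in\Gamma$ (not those with $a_i\equiv d\pmod p$, as your sketch first says — that reading would leave no syzygies at all), and subtracting a multiple of $v_k$ requires a monomial of exponent $d-a_k-q$ lying in $\Gamma$, which need not exist even when the $e_k$-component of your syzygy is nonzero; so the prescription ``kill $e_0$ with $v_0$, then $e_1$ with $v_1$, and so on around the cycle'' can get stuck. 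The honest argument is pairwise: the degree-$d$ syzygy space is spanned by the differences $t^{d-a_i}e_i-t^{d-a_j}e_j$ over contributing $i<j$, and for each pair one telescopes either along the forward path $i\to i+1\to\cdots\to j$ or along the wrap-around path $j\to\cdots\to p-1\to 0\to\cdots\to i$, the choice dictated by the canonical representation $d-a_i=up+vq$ with $0\le v<p$ (forward if $v\ge j-i$, wrap-around otherwise); one then checks that every intermediate exponent $d-a_k-q$ lies in $\Gamma$, using $\alpha_i\le\alpha_k$ for $i\le k$ and $\alpha_k<q$. This degree/order bookkeeping is exactly the step you defer, and it is the entire content of the lemma.

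Your minimality discussion also contains concrete errors. The degrees $\deg v_k=a_k+q$ are not increasing in general, because the $p$-basis elements $a_k=kq-\alpha_kp$ need not increase with $k$: for $\Gamma=\langle 4,5\rangle$ and $\Delta=\langle 0,5,2,7\rangle_\Gamma$ one has $a_1=5>a_2=2$, hence $\deg v_1=10>\deg v_2=7$, so the asserted ordering $d_0<d_1<\cdots<d_{p-2}$ fails (and $d_{p-1}=a_{p-1}+q$ equals $(q-\alpha_{p-1})p$, not $pq$, unless $\alpha_{p-1}=0$). The alternative appeal to a Hilbert--Burch/Auslander--Buchsbaum count is not available either: $\mathbb{C}[\Gamma]\cong\mathbb{C}[x,y]/(x^q-y^p)$ is not regular, minimal resolutions over it are generally infinite, and the $p$-tuple $(t^{a_0},\ldots,t^{a_{p-1}})$ need not even be a minimal generating system of $\mathbb{C}[\Delta]$ (take $\Delta=\Gamma$, where $t^{a_0}=1$ alone generates), so there is no a priori reason this presentation has exactly $p$ minimal syzygies. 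What does work is the graded Nakayama argument you mention only in passing: show directly that no $v_k$ lies in $\mathfrak{m}\cdot\mathrm{Syz}+\sum_{l\neq k}\mathbb{C}[\Gamma]v_l$, using that each $v_l$ is supported in positions $l$, $l+1\ (\mathrm{mod}\ p)$ with monomial entries of the indicated degrees. In sum, the outline points in the right direction, but both the generation step and the minimality step are left with genuine gaps.
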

Before proving the main theorems, we briefly summarize Piontkowski's approach. 
\vspace{1mm}

\noindent
\textbf{Outline of Piontkowski's proofs.}
If $r\ge c$, then we have $\mathrm{Hilb}^r(X,o)\cong \mathrm{Hilb}^c (X,o)\cong J_{X,o}$ by Corollary \ref{stable}. 
Recall that points of $\mathrm{Hilb}^c (X,o)$ correspond bijectively to elements $I\in\mathcal{I}_c$ via $\varphi_c$.  
Moreover, each $I\in\mathcal{I}_c$ corresponds to $t^{-d}I$ where $d:=\min I$.
If $\Delta:=\nu(I)$, then $\nu(t^{-d}I)=\Delta^{(0)}$. 
Thus, Piontkowski identified $t^{-d}I$ with  a point in the $\Delta$-subset $H(\Delta)$ of $J_{X,o}=\mathrm{Hilb}^c (X,o)$. 
In his proof, the following generators of $t^{-d}I$ were used:
\begin{align}\label{g1}
%\begin{cases}
    %h_0&=1+\sum_{\tiny \begin{array}{c}k\notin\Delta^{(0)}\\k\ge 1\end{array}}\lambda_{0,k}t^{k},\\
%h_0&=1+\sum_{\tiny k\notin\Delta^{(0)}}\lambda_{0,k}t^{k},\\ \label{g2}
    h_i&=t^{a_i}+\sum_{\tiny a_{i}+k\notin\Delta^{(0)},\ k\ge 0}\lambda_{i,k}t^{a_i+k},\ (i=0,\ldots,p-1)
   % h_1&=t^{a_1}+\sum_{\tiny \begin{array}{c}a_{1}+k\notin\Delta^{(0)}\\k\ge 1\end{array}}\lambda_{1,k}t^{a_1+k},\\
    %&\ \,\vdots\\
    %h_{p-1}&=t^{a_{p-1}}+\sum_{\tiny \begin{array}{c}a_{p-1}+k\notin\Delta^{(0)}\\k\ge 1\end{array}}\lambda_{p-1,k}t^{a_{p-1}+k}
%\end{cases}
\end{align}
where $\{a_0,a_1,\ldots,a_{p-1}\}$ is the $p$-basis of $\Delta^{(0)}$. 
He verified the conditions for
%that the coefficients of the above generators must satisfy 
 $\Delta^{(0)}= \langle \nu(h_0), \ldots, \nu(h_{p-1})\rangle_\Gamma$.
Set $G=\{t^{p},\phi\}$ where $\phi=t^q+\text{hight order terms}$, and $H=\{h_0, \ldots, h_{p-1}\}$. 
Then
$\Delta^{(0)}= \langle \nu(h_0), \ldots, \nu(h_{p-1})\rangle_\Gamma$ holds if and only if $(G,H)$ is  a standard basis of $R$-module $t^{-d}I$ by Proposition \ref{referee's prp2}. 
Moreover, by Proposition \ref{referee's prp} and Lemma \ref{Syzygies}, 
$(G,H)$ is  a standard basis of $t^{-d}I$ if and only if 
\begin{align}\label{Equation 1}
&R(S_{\mathrm{min}}(h_i,h_{i+1}), G,H)=0 \text{ for } i=0,\ldots,p-2,\\\label{Equation 2}
&R(S_{\mathrm{min}}(h_{p-1},h_{0}), G,H)=0
\end{align} 
hold. 
The minimal $S$-processes are given by
\begin{align*}
S_{\mathrm{min}}(h_i,h_{i+1})&=\phi h_i-t^{(\alpha_{i+1}-\alpha_i)p}h_{i+1}\text{ for } i=0,\ldots,p-2,\\
S_{\mathrm{min}}(h_{p-1},h_{0})&=\phi h_{p-1}-t^{(q-\alpha_{p-1})p}h_0.
\end{align*} 
Using (\ref{Equation 1}) and (\ref{Equation 2}), he analyzed the coefficients of $S$-processes and concluded that the number of independent coefficients in $h_i$'s such that $\Delta^{(0)}= \langle \nu(h_0), \ldots, \nu(h_{p-1})\rangle_{\Gamma}$ is given by 
\begin{equation}\label{dimension of H 3}
\sum_{i=0}^{p-1}\#\{[a_i,a_i+q]\setminus \Delta^{(0)}\}.
\end{equation}
Hence, each $\Delta$-subset $H(\Delta)$ of $J_{X,o}=\mathrm{Hilb}^c (X,o)$ is an affine cell of dimension (\ref{dimension of H 3}). 
When $r\ge c$, we see that $\min \Delta\ge c$, and so  $(-\min\Delta +\Gamma )\cap[a_i,a_i+q]=[a_i,a_i+q]$, making 
(\ref{dimension of H 2}) equal to (\ref{dimension of H 3}). 
 
Once the decomposition (\ref{decomposition 1}) is shown to be an affine cell decomposition, 
it follows that Euler number of $J_{X,o}$ equals the number of $\Delta$-subsets in (\ref{decomposition 1}). 
Moreover, these $\Delta$-subsets form a CW complex. 
So the Betti numbers follow from standard (co-)homology theory (see \cite{P} for details). $\square$

\vspace{1mm}
\noindent
\textbf{Proofs of Theorem $\ref{Watari's result 1}$, Corollary $\ref{Watari's result 2}$ and Theorem $\ref{Watari's result 3}$.}
Piontkowski's method applies directly to our general case without modification.  
We note one remark about the difference between the dimensions  (\ref{dimension of H 2}) and (\ref{dimension of H 3}) of $\Delta$-subsets. 
Let $I\in I(\Delta)\subset\mathcal{I}_r$, and set $d=\min\{I\}$. 
Consider the generators $t^{d}h_i$ $(i=0,\ldots,p-1)$ of $I$, which can be written as
\begin{equation}\label{generators of I}
t^{d}h_i=t^{b_i}+\sum_{\tiny b_{i}+k\notin\Delta,\ k\ge 0}\lambda_{i,k}t^{b_i+k}
\end{equation}
where $b_i:=a_i+d$ and $(i=0,\ldots,p-1)$.
Every ideal in $I(\Delta)$  has generators of the same form as (\ref{generators of I}). 
The numbers of coefficients in (\ref{g1}) and (\ref{generators of I}) are same, since the sets $\mathbb{N}\setminus \Delta^{(0)}$ and $\{n\in \mathbb{N}\setminus \Delta|\,n>d\}$ are in one-to-one correspondence. 
Piontokowski's idea was to count the number of independent coefficients in (\ref{g1}) 
(equivalently, in (\ref{generators of I})). 
This count gives the dimension of $H(\Delta)$ as an affine space. 
If $r<c$, then (\ref{generators of I}) may contain some terms whose orders are not in $\Gamma$. 
%Let $\lambda_{i,k}t^{b_{i}+k}$ $(k=j_1,\ldots,j_l)$ be such terms. 
We claim that the coefficients of such terms are not independent. 
Setting $A:=\{\beta_i\notin \Gamma|\,\beta_i>q\}:=\{\beta_1,\ldots,\beta_s\}$, 
we may assume that $\phi$ is of form $t^q+\sum_{\beta_i\in A} c_it^{\beta_i}$.  
The set $\{t^p, \phi\}$ is a standard basis of $R$ in ordinary sense. 
Reducing $t^{d}h_i$ by $\{t^p, \phi\}$, 
%as an element of $R$, 
the final reduction of $t^{d}h_i$ must be 0, since $t^{d}h_i\in R$.
This implies that, for each term  $\lambda_{i,k}t^{b_{i}+k}$ with $b_i+k\notin \Gamma$,  
there exists polynomial $f_{i,k}(x_1,\ldots,x_s)$ in $\mathbb{C}[x_1,\ldots,x_s]$ such that 
$\lambda_{i,k}=f_{i,k}(c_1,\ldots,c_s)$  
(i.e. the coefficient $\lambda_{i,k}$  is determined by $\varphi$, and hence are not independent). 
This fact explains the difference between  (\ref{dimension of H 2}) and (\ref{dimension of H 3}). 
 $\square$
%Section 6%%%%%%%%%%%%%%%%%%%%%%%%%%%%%%%%%%%%%%%%%%%%%%%%%%%%%%%%%%%%%%%
\section{Examples}\label{Examples}
The punctual Hilbert schemes of curve singularity of type $A_{2l}$  $($i.e.,  the curve singularity with $R=\mathbb{C}[[t^{2},t^{2l+1}]])$ was studied in \cite{SW2}. 
For this singularity, we have $\Gamma=\langle 2, 2l+1\rangle$, $\delta=l$, and $c=2l$. 
Let $e(\mathrm{Hilb}^r (X,o))$ denote  the Euler number of $\mathrm{Hilb}^r (X,o)$. 
Let [ $\cdot$ ]  denote the greatest integer function, where for a real number $a$, the value [$a$] is the largest integer satisfying $[a] \le a$.
%Let the notation [ $\cdot$ ] be the Gaussian symbol  (i.e., for a real number $a$, the value $[a]$ is the greatest integer that is less than or equal to $a$). 
The following example is obtained from the results in \cite{SW2}:
\begin{example}
Let $(X,o)$ be the $A_{2l}$-singularity. 
The Euler numbers of the punctual Hilbert schemes $\mathrm{Hilb}^r (X,o)$ are given in the following table$:$
\begin{center}
\begin{tabular}{c|cc}
$r$&$0\le r\le 2l-1$&$r\ge 2l$\\
\hline
$e(\mathrm{Hilb}^r (X,o))$ &$[r/2]+1$&$l+1$ 
\end{tabular}
\end{center}
The Betti numbers of  $\mathrm{Hilb}^r (X,o)$ are as follows$:$
\begin{equation*}
h_{2d}(\mathrm{Hilb}^r(X,o))= h^{2d}(\mathrm{Hilb}^r(X,o))=1\quad (d\in\{0,1,\ldots,[r/2]\})
\end{equation*}
\end{example}

The punctual Hilbert schemes of curve singularities of type $E_{6}$ and $E_{8}$  $($i.e. the curve singularities with $R=\mathbb{C}[[t^{3},t^{4}]]$ and $R=\mathbb{C}[[t^{3},t^{5}]]$ respectively) were studied in \cite{SW1}. 
For the $E_{6}$-singularity, the fundamental invariants are  $\Gamma=\langle 3,4\rangle$, $\delta=3$ and $c=6$. 
On the other hand, we have $\Gamma=\langle 3,5\rangle$, $\delta=4$ and $c=8$. 
The following examples are derived from the results in \cite{SW1}:

\begin{example}
Let $(X,o)$ be the $E_{6}$-singularity. 
The Euler numbers of the punctual Hilbert schemes $\mathrm{Hilb}^r (X,o)$ are given in the following table$:$

\begin{center}
\begin{tabular}{c|ccccccc}
$r$&$0$&$1$&$2$&$3$&$4$&$5$&$r\ge 6$\\
\hline
$e(\mathrm{Hilb}^r (X,o))$&$1$&$1$&$2$&$3$&$4$&$4$&$5$
\end{tabular}
\end{center}

The Betti numbers of  $\mathrm{Hilb}^r (X,o)$ are also given in the following tables$:$

\begin{table}[h]
\begin{minipage}[t]{.45\textwidth}
\begin{flushright}
\begin{tabular}{c|cccc}
$r$&$h_0$&$h_2$&$h_4$&$h_6$\\
\hline
$0$&$1$&&&\\
\hline
$1$&$1$&&&\\
\hline
$2$&$1$&$1$&&\\
 \hline
$3$&$1$&$1$&1&\\
\hline
$4$&$1$&$1$&2&\\
 \hline
$5$&$1$&$1$&2&\\
\hline
$6$&$1$&$1$&2&1
\end{tabular}
\end{flushright}
%\caption{Homology Groups}
\end{minipage}
\hfill
\begin{minipage}[t]{.45\textwidth}
\begin{flushleft}
\begin{tabular}{c|cccc}
$r$&$h^0$&$h^2$&$h^4$&$h^6$\\
\hline
$0$&$1$&&&\\
\hline
$1$&$1$&&&\\
\hline
$2$&$1$&$1$&&\\
 \hline
$3$&$1$&$1$&1&\\
\hline
$4$&$2$&$1$&1&\\
 \hline
$5$&$2$&$1$&1&\\
\hline
$6$&$1$&$2$&1&1
\end{tabular}
\end{flushleft}
%\caption{Cohomology Groups}
\end{minipage}
\end{table}
\end{example}

\begin{example}
Let $(X,o)$ be the $E_{8}$-singularity. 
The Euler numbers of the punctual Hilbert schemes $\mathrm{Hilb}^r (X,o)$ are given in the following table$:$

\begin{center}
\begin{tabular}{c|ccccccccc}
$r$&$0$&$1$&$2$&$3$&$4$&$5$&$6$&$7$&$r\ge 8$\\
\hline
$e(\mathrm{Hilb}^r (X,o))$&$1$&$1$&$2$&$3$&$4$&$5$&$6$&$6$&$7$
\end{tabular}
\end{center}

The Betti numbers of  $\mathrm{Hilb}^r (X,o)$ are also given in the following tables$:$

\begin{table}[h]
\begin{minipage}[t]{.45\textwidth}
\begin{flushright}
\begin{tabular}{c|ccccc}
$r$&$h_0$&$h_2$&$h_4$&$h_6$&$h_8$\\
\hline
$0$&$1$&&&&\\
\hline
$1$&$1$&&&&\\
\hline
$2$&$1$&$1$&&&\\
 \hline
$3$&$1$&$1$&1&&\\
\hline
$4$&$1$&$1$&2&&\\
 \hline
$5$&$1$&$1$&2&1&\\
\hline
$6$&$1$&$2$&1&2&\\
\hline
$7$&$1$&$1$&$2$&$2$&\\
\hline
$8$&$1$&$1$&$2$&$2$&$1$
\end{tabular}
\end{flushright}
%\caption{Homology Groups}
\end{minipage}
\hfill
\begin{minipage}[t]{.45\textwidth}
\begin{flushleft}
\begin{tabular}{c|ccccc}
$r$&$h^0$&$h^2$&$h^4$&$h^6$&$h^8$\\
\hline
$0$&$1$&&&&\\
\hline
$1$&$1$&&&&\\
\hline
$2$&$1$&$1$&&&\\
 \hline
$3$&$1$&$1$&1&&\\
\hline
$4$&$2$&$1$&1&&\\
 \hline
$5$&$1$&$2$&1&1&\\
\hline
$6$&$2$&$1$&2&1&\\
 \hline
$7$&$2$&$2$&1&1&\\
\hline
$8$&$1$&$2$&2&1&1
\end{tabular}
\end{flushleft}
%\caption{Cohomology Groups}
\end{minipage}
\end{table}
\end{example}
%Section 7%%%%%%%%%%%%%%%%%%%%%%%%%%%%%%%%%%%%%%%%%%%%%%%

\section{Remarks on the results similar to Theorem \ref{Watari's result 1}}\label{Remarks}
In this section, we discuss the results of Pfister and Steenbrink in \cite{PS}. 
Here we consider a monomial curve singularity $(X,o)$. 
\begin{Def}[\cite{PS}, Definition 8]\label{monomial curve singularity}
We call a curve singularity $(X,o)$ with $R=\mathbb{C}[[t^{a_1},\cdots,t^{a_n}]]$ for some positive integers $a_1,\ldots,a_n$ a \emph{monomial curve singularity}.
\end{Def}
Without loss of generality, we assume that $\gcd(a_1,\ldots,a_n)=1$ in Definition \ref{monomial curve singularity}.  
Additionally, we focus on special semigroups defined as follows:
\begin{Def}[\cite{PS}, Definition 9]
A semigroup $\Gamma$ is called \emph{monomial} if $0\in\Gamma$, $\# (\mathbb{N}\setminus \Gamma)<\infty$, and any reduced, irreducible curve singularity with $\Gamma$ is a monomial curve singularity. 
\end{Def}

The monomial semigroups were completely determined in \cite{PS}.

\begin{thm}[\cite{PS}, Theorem 10]
A monomial semigroup is monomial if and only if it is one of the following three types$:$
\begin{align*}
&\{im|\,i=0,1,\ldots,s\}\cup [sm+b,\infty )\text{ with } 1\le b<m, \ s\ge 1,\\
&\{0\}\cup [m,m+r-1]\cup [m+r+1,\infty)\text{ with } 2\le r\le m-1,\\ 
&\{0,m\}\cup [m+2,2m]\cup [2m+2,\infty)\text{ with } m\ge 3
\end{align*}
\end{thm}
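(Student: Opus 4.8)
\medskip
\noindent\textbf{Sketch of a proof.}
The plan is to recast ``$\Gamma$ is monomial'' as an equality of two explicit dimensions attached to the family of curves with semigroup $\Gamma$, and then to treat the two implications separately. Fix a numerical semigroup $\Gamma$ ($0\in\Gamma$, $\#(\mathbb{N}\setminus\Gamma)=\delta<\infty$) with multiplicity $m:=\min(\Gamma\setminus\{0\})$, conductor $c$, and minimal generators $\gamma_1=m<\cdots<\gamma_n$, and let $V_\Gamma$ be the set of subalgebras $R\subseteq\mathbb{C}[[t]]$ with $\nu(R)=\Gamma$. As in Section~\ref{Proof}, every such $R$ is generated by canonical elements $f_i=t^{\gamma_i}+\sum_{\beta\notin\Gamma,\ \beta>\gamma_i}\lambda_{i\beta}t^{\beta}$, and the requirement that the $f_i$ generate a subring with semigroup exactly $\Gamma$ cuts $V_\Gamma$ out inside the affine space of the coordinates $(\lambda_{i\beta})$; for $\Gamma=\langle p,q\rangle$ this requirement is the vanishing of the reductions of the $p$ minimal $S$-processes, via Proposition~\ref{referee's prp} and Lemma~\ref{Syzygies} exactly as in the Outline of Section~\ref{Proof}, and in higher embedding dimension one argues the same way with the standard-basis machinery of Section~\ref{Grobner bases and Syzygies}. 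The group $G:=\mathrm{Aut}\,\mathbb{C}[[t]]$ of reparametrizations acts on $V_\Gamma$, and $\Gamma$ is monomial exactly when $V_\Gamma$ is the single orbit $G\cdot v$ of the monomial curve $v:=\mathbb{C}[\Gamma]$. Moreover $V_\Gamma$ carries a $\mathbb{C}^{*}$-action from $t\mapsto\lambda t$ (in coordinates $\lambda_{i\beta}\mapsto\lambda^{\beta-\gamma_i}\lambda_{i\beta}$) with unique fixed point $v$ contracting every point to $v$; since orbits are irreducible, a reducible $V_\Gamma$ is not a single orbit, and assuming $V_\Gamma$ irreducible a $\mathbb{C}^{*}$-fixed-point argument (if $\dim(G\cdot v)=\dim V_\Gamma$ then the complement of $G\cdot v$ is a $\mathbb{C}^{*}$-stable proper closed subset, hence empty by the contraction; conversely a single-orbit $V_\Gamma$ is $G\cdot v$ since $v$ lies in every orbit closure) gives
\[
\Gamma\ \text{is monomial}\iff\dim(G\cdot v)=\dim V_\Gamma .
\]

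Both dimensions are explicit. The Lie algebra of $G$ is $\mathfrak g=\bigoplus_{j\ge0}\mathbb{C}\,t^{j+1}\partial_t$, and $t^{j+1}\partial_t$ sends $t^{\gamma}\mapsto\gamma t^{\gamma+j}$, hence stabilizes $v$ iff $\gamma+j\in\Gamma$ for all $\gamma\in\Gamma_{>0}$; therefore $\dim(G\cdot v)=\#\{\,j\ge1:\gamma+j\notin\Gamma\text{ for some }\gamma\in\Gamma_{>0}\,\}$. On the other side, for $\Gamma=\langle p,q\rangle$ one checks — exactly as one checks $\nu(\mathbb{C}[[f_p,f_q]])=\langle p,q\rangle$ for arbitrary canonical $f_p,f_q$, any cancellation among monomials carrying the valuation past the conductor — that the ring requirement is vacuous, so $V_\Gamma$ is the full affine space and $\dim V_\Gamma=\sum_{i=1}^{n}\#\{\beta\notin\Gamma:\beta>\gamma_i\}$; in general $\dim V_\Gamma$ is read off from the $S$-process relations. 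The theorem thus reduces to a purely combinatorial statement about $\Gamma$.

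For the ``if'' direction I would verify the identity $\dim(G\cdot v)=\dim V_\Gamma$ for each family directly. In all three $c\le2m$, and every minimal generator $\gamma_i>m$ has very few gaps above it — the top block $\{sm+1,\dots,sm+b-1\}$ for type~1, the single gap $m+r$ for type~2, at most $\{2m+1\}$ for type~3 — so the two formulas above coincide after a short finite computation. Equivalently one can argue by hand: normalize $f_1=t^m$ (an $m$-th root extraction, which exhausts $G$ apart from the finite group $\mu_m$), check by a direct analysis of the $S$-processes that the ring condition determines all remaining $\lambda_{i\beta}$, and remove the last surviving parameter with $\mu_m$, so that $R=\mathbb{C}[\Gamma]$.

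The ``only if'' direction is the crux and the place where I expect essentially all of the work. Assuming $\Gamma$ is none of the three shapes, one must produce an honest one-parameter subfamily $R_\varepsilon\subseteq V_\Gamma$ with $R_\varepsilon\not\cong\mathbb{C}[\Gamma]$ for $\varepsilon\ne0$. The recipe is to locate a minimal generator $\gamma$ and a gap $\beta>\gamma$, not too large, for which the deformation $f_\gamma\mapsto t^{\gamma}+\varepsilon\,t^{\beta}$ (all other generators left monomial) (i) still generates a subring with semigroup $\Gamma$ for every $\varepsilon$ — here the hypothesis on $\Gamma$ enters, guaranteeing that the finitely many obstructions from the $S$-process reductions vanish identically — and (ii) is not undone by any reparametrization. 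For (ii) the point is that $\mathfrak g$ acts on each ``level'' $\beta-\gamma$ only through a single scalar common to all generators, so a deformation supported on one generator is non-gauge unless it can be compensated on the others; one then exhibits a $G$-invariant of $R_\varepsilon$ — a ratio of coefficients in its canonical $\mu_m$-normal form — that varies with $\varepsilon$. The genuine difficulty is combinatorial: one must show that the three exceptional shapes are exactly the numerical semigroups for which every candidate pair $(\gamma,\beta)$ is either forced to $\varepsilon\equiv0$ by the ring condition or killed by the gauge. I would organize this as a finite case analysis on the multiplicity $m$, the second minimal generator $\gamma_2$, and the smallest gap exceeding $\gamma_2$, recovering types~1--3 as precisely the obstructed configurations; carrying this case split through, together with the verification that the constructed families are genuinely non-isomorphic, is the main obstacle.
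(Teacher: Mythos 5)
First, note that the paper does not prove this statement at all: it is quoted verbatim from Pfister--Steenbrink (\cite{PS}, Theorem 10) in Section \ref{Remarks}, so there is no internal proof to compare yours against. Judged on its own terms, your proposal is a reasonable research plan but not a proof, and the gap is one you yourself flag: the ``only if'' direction --- showing that \emph{every} numerical semigroup outside the three listed families carries some branch not isomorphic to the monomial curve --- is exactly the content of the classification, and you leave it as an unexecuted ``finite case analysis'' on $(m,\gamma_2,\beta)$. Nothing in the sketch identifies which pairs $(\gamma,\beta)$ survive the S-process obstructions and the gauge action, nor why the obstructed configurations are precisely types 1--3; since the semigroups in question form an infinite family, even organizing this as a finite case split needs an argument. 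Without that, the theorem is not proved.

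There are also problems in the parts you treat as routine. Your reduction ``$\Gamma$ monomial $\iff \dim(G\cdot v)=\dim V_\Gamma$'' is only established under the hypothesis that $V_\Gamma$ is irreducible, which you simply assume; the $\mathbb{C}^*$-contraction to $v$ does not by itself exclude a smaller-dimensional component lying outside $\overline{G\cdot v}$, and irreducibility of $V_\Gamma$ is not obvious (the relevant semigroups here have embedding dimension $>2$, where the SAGBI/S-process conditions cutting out $V_\Gamma$ are genuinely nontrivial, so ``$\dim V_\Gamma$ is read off from the $S$-process relations'' needs an actual argument that these loci are of the expected dimension). The ``if'' verification is likewise only asserted: the claim ``in all three types $c\le 2m$'' is false for type 1 once $s\ge 2$ (there $c=sm+b$), and normalizing $f_1=t^m$ does \emph{not} in general reduce the residual gauge group to $\mu_m$, since automorphisms can be corrected by re-normalizing the canonical generators of $\sigma(R)$; so the claimed ``short finite computation'' has not actually been carried out and, as stated, rests on incorrect bookkeeping. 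The two-generator observation (that for $\Gamma=\langle p,q\rangle$ the ring condition is vacuous because the minimal S-process has order beyond the conductor) is fine, but it concerns the one case irrelevant to the classification. In summary: correct general framing, but the central combinatorial classification and the dimension computations that the argument hinges on are missing.
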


\begin{example}
Semigroups $\langle 2,2l+1\rangle$ with $l\ge 1$, $\langle 3,4\rangle$, and $\langle 3,5\rangle$ are all monomial semigroups that can be realized as plane curve singularities. 
\end{example}

For a $\Gamma$-semimodule $\Delta$, let $S$ be the set of minimal generators of $\Delta^{(\delta)}$. 
Set $S':=S\cap [0, 2\delta-1]$. 
For each $\gamma\in S'$, define $J_\gamma:=[\gamma+1,2\delta-1]\setminus \Delta^{(\delta)}$.

\begin{thm}[\cite{PS}, Theorem 11]\label{PS Theorem 11}
Let $(X,o)$ be a curve singularity with a monomial semigroup. 
For  $I\in\mathcal{I}_r$, there exist uniquely determined $u_{\gamma,j}\in \mathbb{C}$  
such that, as $R$-submodule, $\varphi_r(I)$ is generated by 
\begin{equation*}
g_{\gamma}=t^{\gamma}+\sum_{j\in J_{\gamma}}u_{\gamma,j}t^j, (\gamma\in S') .
\end{equation*}
\end{thm}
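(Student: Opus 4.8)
The plan is to realize $\varphi_r(I)$ as a $\delta$-dimensional subspace of the finite-dimensional vector space $V:=\overline{R}/(t^{2\delta})=\bigoplus_{k=0}^{2\delta-1}\mathbb{C}t^{k}$ and to extract the generators $g_\gamma$ from the reduced row echelon basis of that subspace with respect to the local order $t^{0}\succ t^{1}\succ\cdots\succ t^{2\delta-1}$, using monomiality of $\Gamma$ at exactly one point. First I would collect the combinatorial data. Set $\Delta:=\Gamma(I)$, $\Delta^{(\delta)}:=-r+\Delta$, and $D:=\Delta^{(\delta)}\cap[0,2\delta-1]$. By the facts recalled in Sections \ref{preliminaries} and \ref{Semi-modules} one has $\nu(t^{-r}I)=\Delta^{(\delta)}$, $(t^{2\delta})\subseteq t^{-r}I$, $\dim_{\mathbb{C}}(\overline{R}/t^{-r}I)=\delta$, and $\# D=\delta$. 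Hence $\varphi_r(I)=t^{-r}I/(t^{2\delta})$ is a $\delta$-dimensional $R$-submodule of $V$ whose set of leading exponents is precisely $D$ (it is the part of $\nu(t^{-r}I)=\Delta^{(\delta)}$ lying below $2\delta$). Gaussian elimination with respect to the above order therefore yields a unique basis $\{e_j\}_{j\in D}$ of $\varphi_r(I)$ in reduced echelon form, $e_j=t^{j}+\sum_{k\in[j+1,2\delta-1]\setminus D}u_{j,k}t^{k}$; since every such $k$ is $\le 2\delta-1$, we have $[j+1,2\delta-1]\setminus D=[j+1,2\delta-1]\setminus\Delta^{(\delta)}$, so for $j=\gamma\in S'$ this reads exactly $e_\gamma=t^{\gamma}+\sum_{k\in J_\gamma}u_{\gamma,k}t^{k}$.

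The substantive step is to show that $\varphi_r(I)$ is generated, as an $R$-module, already by the sub-family $\{e_\gamma:\gamma\in S'\}$. This is the only place monomiality of $\Gamma$ enters: by definition of a monomial curve singularity $R=\mathbb{C}[[t^{a_1},\dots,t^{a_n}]]$, which is the completion of the semigroup ring $\mathbb{C}[\Gamma]=\bigoplus_{\eta\in\Gamma}\mathbb{C}t^{\eta}$, so $t^{\eta}\in R$ for every $\eta\in\Gamma$. Fix $j\in D$. The set $\{\gamma'\in\Delta^{(\delta)}:\gamma'\le j,\ j-\gamma'\in\Gamma\}$ is finite and nonempty (it contains $j$); let $\gamma$ be its least element. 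Minimality forces $\gamma$ to be a minimal generator of the $\Gamma$-semimodule $\Delta^{(\delta)}$ (if $\gamma=\mu+\eta'$ with $\mu\in\Delta^{(\delta)}$ and $\eta'\in\Gamma\setminus\{0\}$, then $\mu<\gamma$, $\mu\le j$, and $j-\mu=(j-\gamma)+\eta'\in\Gamma$, contradicting minimality), and since $\gamma\le j\le 2\delta-1$ we get $\gamma\in S'$. Then $t^{j-\gamma}e_\gamma$ lies in $\sum_{\gamma'\in S'}R\,e_{\gamma'}$ and has leading exponent $j$ in $V$. Thus every element of $D$ occurs as the leading exponent of some element of the $R$-submodule $N$ generated by $\{e_\gamma:\gamma\in S'\}$, so $\dim_{\mathbb{C}}N\ge\# D=\delta=\dim_{\mathbb{C}}\varphi_r(I)$; as $N\subseteq\varphi_r(I)$, we conclude $N=\varphi_r(I)$. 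This gives existence with $g_\gamma:=e_\gamma$.

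Uniqueness is then immediate. If a family $\{g_\gamma:\gamma\in S'\}$ with $g_\gamma=t^{\gamma}+\sum_{j\in J_\gamma}u_{\gamma,j}t^{j}$ generates $\varphi_r(I)$ as an $R$-module, then each $g_\gamma\in\varphi_r(I)$, hence $g_\gamma-e_\gamma\in\varphi_r(I)$; but $g_\gamma-e_\gamma$ is supported on $J_\gamma\subseteq[0,2\delta-1]\setminus\Delta^{(\delta)}$, which is disjoint from the leading-exponent set $D$ of $\varphi_r(I)$. Since a nonzero vector of $\varphi_r(I)$ must have leading exponent in $D$, we get $g_\gamma=e_\gamma$, so the coefficients $u_{\gamma,j}$ must coincide with those of the echelon basis and are thereby uniquely determined.

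The main obstacle is the generation step of the second paragraph, and its heart is the single appeal to monomiality: the identity $t^{\eta}\in R$ for all $\eta\in\Gamma$ is exactly what makes the $\Gamma$-semimodule combinatorics of $\Delta^{(\delta)}$ govern the $R$-module $\varphi_r(I)$ rigidly, so that the minimal generators of $\Delta^{(\delta)}$ below $2\delta$ already suffice and no parameters beyond the $u_{\gamma,j}$ survive. For a non-monomial $\Gamma$ this rigidity breaks down — which is precisely why in the single-Puiseux-pair situation of Theorem \ref{Watari's result 1} the $\Delta$-subset is only an affine cell with genuinely varying parameters rather than a unique normal form. The one remaining point, that the echelon reduction is legitimate and terminates, is automatic because $V$ is finite-dimensional (equivalently, it is the Division Algorithm of Section \ref{Grobner bases and Syzygies} carried out modulo $(t^{2\delta})$).
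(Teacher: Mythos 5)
The first thing to flag is that the paper contains no proof of this statement for you to be compared against: it is quoted from Pfister--Steenbrink \cite{PS}, and the whole purpose of Section \ref{Remarks} is to argue that Theorem \ref{PS Theorem 11} and Corollary \ref{PS Corollary of Theorem 11} \emph{fail} in general. The paper's counterexample is the $E_6$-singularity $R=\mathbb{C}[[t^3,t^4]]$ with $r=2$ and $\Delta=\langle 4,6,7\rangle_\Gamma$, hence $\Delta^{(\delta)}=\langle 2,4,5\rangle_\Gamma$: there $\mathcal{I}(\Delta)=\{(t^4,t^6,t^7)\}$, so $H(\Delta)$ is a single point, while the Corollary would make it an affine space of dimension $\sum_{\gamma\in S'}\#J_\gamma=\#J_2=1$. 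So you have written a proof of a statement that the paper sets out to refute, not to prove.

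On the mathematics itself: your echelon-basis argument does establish the statement in the literal form quoted here (for each fixed $I$ the coefficients $u_{\gamma,j}$ exist and are unique), and the paper's example does not contradict that weak form --- there the unique value is $u_{2,3}=0$, forced because an order-$4$ element of $\mathbb{C}[[t^3,t^4]]$ cannot contain a $t^5$ term ($5\notin\Gamma$). What fails, and what the paper is targeting, is the stronger content needed for the Corollary: that the $u_{\gamma,j}$ are \emph{free} parameters, i.e.\ that every choice of coefficients is realized by an ideal $I\subseteq R$, so that $H(\Delta)\cong\mathbb{C}^{\sum_{\gamma\in S'}\#J_\gamma}$. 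Your proof (correctly) never asserts this surjectivity of $I\mapsto(u_{\gamma,j})$, but your closing paragraph --- ``no parameters beyond the $u_{\gamma,j}$ survive,'' and the monomial versus non-monomial contrast with Theorem \ref{Watari's result 1} --- slides toward exactly that misreading and gets the comparison backwards: $\Gamma=\langle 3,4\rangle$ is monomial and has a single Puiseux pair, and it is precisely in this monomial situation with $r<c$ that some coefficients are forced to vanish, so that $\dim H(\Delta)$ drops below $\sum_{\gamma\in S'}\#J_\gamma$; the corrected dimension is the one in Theorem \ref{Watari's result 1}, which gives $0$ in the example. So present your argument only as a pointwise normal-form statement, make explicit that it does not yield the dimension count of Corollary \ref{PS Corollary of Theorem 11}, and acknowledge that the paper's own position is that the result as used in \cite{PS} does not hold in general.
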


\begin{cor}[\cite{PS}, Corollary of Theorem 11]\label{PS Corollary of Theorem 11}
For a curve singularity $(X,o)$ with a monomial semigroup, a $\Delta$-subset $H(\Delta)$ of $\mathrm{Hilb}^r(X,o)$ is an affine space of dimension $\sum_{\gamma\in S'}\# J_\gamma$. 
\end{cor}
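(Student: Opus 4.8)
\vspace{1mm}
\noindent\textbf{Proof strategy.}
Write $N:=\sum_{\gamma\in S'}\#J_\gamma$. The plan is to promote Theorem \ref{PS Theorem 11} into an explicit isomorphism of varieties $H(\Delta)\cong\mathbb{A}^{N}$. First I would note that for every $I\in\mathcal{I}(\Delta)$ one has $\Delta^{(\delta)}=-r+\Gamma(I)=-r+\Delta$, so the data $S$, $S'$, $\{J_\gamma\}_{\gamma\in S'}$ depend only on $\Delta$; hence, by Theorem \ref{PS Theorem 11}, sending $\varphi_r(I)\in H(\Delta)$ to its uniquely determined coefficient vector $(u_{\gamma,j})_{\gamma\in S',\,j\in J_\gamma}$ gives a well-defined \emph{injective} map $\Psi\colon H(\Delta)\to\mathbb{A}^{N}$. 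It remains to prove that $\Psi$ is surjective and that $\Psi$ and $\Psi^{-1}$ are morphisms.

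For surjectivity, fix $(u_{\gamma,j})\in\mathbb{A}^{N}$, set $g_\gamma:=t^{\gamma}+\sum_{j\in J_\gamma}u_{\gamma,j}t^{j}$ $(\gamma\in S')$, and let $M:=\sum_{\gamma\in S'}R\,g_\gamma\subset\overline{R}/(t^{2\delta})$. The key point is that $\nu(M)$ agrees with $\Delta^{(\delta)}$ on $[0,2\delta-1]$; granting it, $\dim_{\mathbb{C}}M=\#\big(\Delta^{(\delta)}\cap[0,2\delta-1]\big)=\delta$, $M$ is an $R$-submodule and so lies in $J_{X,o}$, and by the properties of the $\delta$-normalized embedding (Proposition \ref{thm3}) the preimage of $M$ in $\overline{R}$ is $t^{-r}I$ for an ideal $I\subset R$ with $\dim R/I=r$ and $\Gamma(I)=\Delta$ (using $\Delta\subseteq\Gamma$, which holds whenever $H(\Delta)\neq\emptyset$, the only case of interest); thus $M\in H(\Delta)$, and taking $u=0$ in particular shows $H(\Delta)\neq\emptyset$. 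To establish the key point I would invoke the Gröbner-basis machinery of Section \ref{Grobner bases and Syzygies}: since $(X,o)$ is a \emph{monomial} curve singularity, $R$ consists exactly of the power series supported on $\Gamma$ and $G:=\{t^{a_1},\dots,t^{a_n}\}$ is a SAGBI basis of $R$; with $H:=\{g_\gamma\mid\gamma\in S'\}$, Proposition \ref{referee's prp} reduces the claim to checking that each minimal $S$-process $S_{\mathrm{min}}(g_\gamma,g_{\gamma'})$ reduces to $0$ modulo $(G,H)$. These $S$-processes are governed by the syzygies of the monomial tuple $(t^{\gamma})_{\gamma\in S'}$ over $\Gamma$, which — as in Lemma \ref{Syzygies} for the single-Puiseux-pair case — admit an explicit description; since each $J_\gamma\subset[\gamma+1,2\delta-1]$ consists of elements outside $\Delta^{(\delta)}$ lying strictly above $\gamma$, the tails $\sum_j u_{\gamma,j}t^{j}$ cannot produce a leading term in $\Delta^{(\delta)}$ during the reductions, so the reductions vanish \emph{for all} values of the $u_{\gamma,j}$, and then $\Gamma(M)=\langle\gamma\mid\gamma\in S'\rangle_\Gamma$ agrees with $\Delta^{(\delta)}$ on $[0,2\delta-1]$ by Proposition \ref{referee's prp2}.

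Finally, $\Psi$ is an isomorphism of varieties: putting the tautological family of submodules over $H(\Delta)\subset\mathrm{Gr}\big(\delta,\overline{R}/(t^{2\delta})\big)$ into row echelon form for the local order $\nu(1)\succ\nu(t)\succ\cdots$ exhibits the coefficients $u_{\gamma,j}$ as regular functions, so $\Psi$ is a morphism, while the span of $\{t^{\mu}g_\gamma\bmod t^{2\delta}\}$ depends polynomially on $(u_{\gamma,j})$, so $\Psi^{-1}$ is a morphism; hence $H(\Delta)\cong\mathbb{A}^{N}$. (For $r\ge c$ every $u_{\gamma,j}$ is genuinely free and $\dim H(\Delta)=N$; for smaller $r$ the additional inclusion $\varphi_r(I)\subset t^{-r}\big((t^{r})\cap R\big)/(t^{2\delta})$ pins to zero those $u_{\gamma,j}$ with $j\notin -r+\Gamma$, so $\dim H(\Delta)$ equals the number of the remaining free ones.) The main obstacle is the surjectivity step — concretely, the standard-basis verification: unlike the single-Puiseux-pair situation, where Lemma \ref{Syzygies} gives one uniform syzygy pattern, here this must be carried out separately over the three families of monomial semigroups classified in \cite{PS}.
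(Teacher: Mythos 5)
There is a genuine and unfixable gap: the statement you are proving is actually false for $r<c$, and the paper never proves it. Corollary \ref{PS Corollary of Theorem 11} is quoted from \cite{PS} only so that Section \ref{Remarks} can refute it: for the $E_6$-singularity ($R=\mathbb{C}[[t^3,t^4]]$, $\Gamma=\langle 3,4\rangle$, $\delta=3$) take $r=2$ and $\Delta=\langle 4,6,7\rangle_\Gamma$, so $\Delta^{(\delta)}=\langle 2,4,5\rangle_\Gamma$, $S'=\{2,4,5\}$, $J_2=\{3\}$, $J_4=J_5=\emptyset$, hence $N=\sum_{\gamma\in S'}\#J_\gamma=1$; but $\mathcal{I}(\Delta)=\{(t^4,t^6,t^7)\}$, so $H(\Delta)$ is a single point and $\dim H(\Delta)=0\neq 1$. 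So no argument can establish $H(\Delta)\cong\mathbb{A}^N$ in the generality you claim; the correct dimension involves the extra intersection with a shift of $\Gamma$, which in the plane-curve case is exactly the difference between (\ref{dimension of H 2}) and (\ref{dimension of H 3}) addressed by Theorem \ref{Watari's result 1}.

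The step of yours that breaks is surjectivity of $\Psi$, specifically the sentence ``by the properties of the $\delta$-normalized embedding (Proposition \ref{thm3}) the preimage of $M$ in $\overline{R}$ is $t^{-r}I$ for an ideal $I\subset R$.'' Proposition \ref{thm3} gives injectivity of $\varphi_r$ and bijectivity onto $J_{X,o}$ only for $r\ge c$; for $r<c$, knowing $M\in J_{X,o}$ does not produce an ideal of $R$, and indeed for generic $(u_{\gamma,j})$ none exists. Concretely, any $I$ with $\varphi_r(I)=M$ must contain an element of order $r+\gamma$ whose expansion carries the terms $u_{\gamma,j}t^{r+j}$; since $j\notin\Delta^{(\delta)}$ we have $r+j\notin\Delta=\Gamma(I)$, so these terms cannot be removed by subtracting other elements of $I$, while $R$ is a monomial ring whose elements are supported on $\Gamma$ --- hence $u_{\gamma,j}=0$ is forced whenever $r+j\notin\Gamma$ (in the example above, $u_{2,3}$ must vanish because $5\notin\Gamma$). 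Your Gr\"obner/standard-basis verification only shows that $M$ is an $R$-submodule with the right valuation set, i.e.\ a point of $J_{X,o}$, not a point of $\mathrm{Hilb}^r(X,o)=\varphi_r(\mathcal{I}_r)$, which is a proper closed subset when $r<c$. Note also that your closing parenthetical --- that for $r<c$ the coefficients $u_{\gamma,j}$ with $j\notin -r+\Gamma$ are pinned to zero --- is essentially the correct statement, and it directly contradicts the conclusion $H(\Delta)\cong\mathbb{A}^N$ you set out to prove; with that restriction incorporated, the count of the remaining free coefficients is what the paper's main theorem establishes (for a single Puiseux pair) in place of the Pfister--Steenbrink formula.
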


\begin{rem}
Theorem \ref{Watari's result 1} is stated in terms of $\Delta^{(0)}$, whereas  Theorem \ref{PS Theorem 11} and Corollary \ref{PS Corollary of Theorem 11} are expressed using $\Delta^{(\delta)}$.
\end{rem}

However, Theorem \ref{PS Theorem 11} and Corollary \ref{PS Corollary of Theorem 11} do not hold in general. 
Suppose that Theorem \ref{PS Theorem 11} and Corollary \ref{PS Corollary of Theorem 11} do hold for the $E_6$-singularity. 
Recall that $R=\mathbb{C}[[t^3,t^4]]$, $\Gamma=\langle 3,4\rangle$ and $\delta=3$ for this singularity. 
Consider a $\Gamma$-semimodule $\Delta= \langle 4,6,7\rangle_\Gamma$. 
It is easy to verify that $\Delta\in\mathrm{Mod}_2(\Gamma)$. 
So its $\delta$-normarization is give by  $\Delta^{(\delta)}=-2+\Delta=\langle 2,4,5\rangle_\Gamma$. 
Let $I$ be an element of $\mathcal{I}(\Delta)$.
It follows from Theorem \ref{PS Theorem 11} that $\varphi_2(I)$ is generated by $g_{2}=t^2+u_{2,3}t^3$, $g_4=t^4$ and $g_5=t^5$ as $R$-submodule.  
By Corollary \ref{PS Corollary of Theorem 11}, we also have $\dim H(\Delta)=1$. 
However, we see that $\mathcal{I}(\Delta)=\{(t^4, t^6,t^7)\}$, 
which means $H(\Delta)$ consist of a single point. 
Therefore, $\dim H(\Delta)=0$. 
Moreover, it is obvious that $\varphi_2((t^4, t^6,t^7))$ is generated by $t^2$, $t^4$, and $t^5$.
These contradict the assumption.
 
\section*{Acknowledgements}
The author would like to thank Vivek Shende for informing him about his result.

\noindent
\small 
Masahiro Watari\\
 University of Kuala Lumpur, Malaysia France Institute\\
Japanese Collaboration Program\\
Section 14, Jalan Damai, Seksyen 14, 43650\\
Bandar Baru Bangi, Selengor, Malaysia.\\
E-mail:masahiro@unikl.edu.my
\end{document}